\documentclass[12pt]{article}
\usepackage[margin=1in]{geometry}

\usepackage{graphicx} 
\usepackage{amsfonts}       
\usepackage{nicefrac}       
\usepackage{microtype}      
\usepackage{algorithm}
\usepackage{algorithmic}
\usepackage{setspace}
\usepackage{caption}
\usepackage{amsfonts}
\usepackage{amsmath}
\usepackage{amssymb}
\usepackage{upgreek}
\usepackage{fancyhdr}
\usepackage{titlesec}
\usepackage{indentfirst}
\usepackage{booktabs}
\usepackage{verbatim}
\usepackage{color}
\usepackage{tcolorbox}
\usepackage{mathtools}
\usepackage{bm}
\usepackage{amsthm}
\usepackage{wasysym}
\usepackage{empheq}
\usepackage{hyperref}
\usepackage{caption}
\usepackage{subcaption}
\usepackage{ulem}
\usepackage{enumerate}

\newtheorem{theorem}{Theorem}[section]
\newtheorem{corollary}{Corollary}

\newtheorem{lemma}[theorem]{Lemma}

\newtheorem{assumption}{Assumption}

\newtheorem{definition}{Definition}

\def\e{\epsilon}

\def\lam{\lambda}

\def\th{\theta}

\def\th{\theta}

\def\O{\Omega}

\def\tcr{\textcolor{black}}

\def\A{{\mathbb A}}

\def\E{\mathbb E}

\def\R{\mathbb R}
\def\S{{\mathbb S}}

\def\mL{\mathcal{L}_{b,\Sigma}}

\def\mA{\mathcal A}

\def\l{\left}
\def\r{\right}
\def\la{\left\langle}
\def\ra{\right\rangle}
\def\ll{\left\lVert}
\def\rl{\right\rVert}

\def\({\left(}
\def\){\right)}

\usepackage[comma, sort]{natbib}
\setcitestyle{author-year}
\hypersetup{
    colorlinks=true,
    linkcolor=blue,
    filecolor=blue,      
    urlcolor=blue,
    citecolor=cyan,
}
\def\pt{\partial}
\def\nb{\nabla}

\def\t{\tilde}

\def\dt{{\Delta t}}

\def\tcr{\textcolor{red}}

\def\argmax{\text{argmax}}
\title{PhiBE-Q-Learning: Bridging Off-Policy Reinforcement Learning and Continuous-Time Control}
\author{
Yuhua Zhu \thanks{Corresponding author: Department of Statistics and Data Science, University of California, Los Angeles, USA. \texttt{yuhuazhu@ucla.edu}}
\and
Yutong Ren \thanks{Department of Statistics and Data Science, University of California Los Angeles, California, USA. \texttt{ytren@ucla.edu}}
}

\date{}
\begin{document}

\maketitle
\begin{abstract}
In this paper, we develop an off-policy method for continuous-time reinforcement learning (CTRL), where the system dynamics are governed by an unknown stochastic differential equation (SDE) and only discrete-time trajectory data are available. 
A central challenge is that the classical state--action value function $Q(s,a)$, which enables off-policy learning in discrete-time RL, does not exist in CTRL \citep{tallec2019making, jia2023q, baird1994reinforcement}. On the other hand, continuous-time control provides local notions such as the instantaneous advantage function $q(s,a)$, but these typically rely on state value function $V(s)$. To address this, we introduce a new definition of the state--action value function in CTRL and derive its governing equation.

Building on the PhiBE approximation \citep{zhu2024phibe,zhu2025optimal}, we propose iterative algorithms to approximate the optimal $Q$-function in both model-based and model-free settings using only discrete-time off-policy data. Under linear function approximation, we establish convergence guarantees and derive explicit convergence rates for the proposed method.
\end{abstract}

\section{Introduction}






Reinforcement learning (RL) has achieved remarkable success in the digital world, including AlphaGo \citep{silver2016mastering}, Atari gameplay \citep{mnih2015human}, and the fine-tuning of large language models \citep{Ziegler2019}. A key feature underlying these successes is that the system state remains static between observations. This property is natural in digital settings, making such environments well suited to RL algorithms built on the Markov decision process (MDP) framework. However, decision-making in the physical world departs from this digital paradigm in two fundamental respects. First, the objective is not merely to maximize rewards at the discrete times when data are observed, but to control performance continuously over time, including during periods without observations. In dynamic treatment regimes, for example, a patient's physiological state (e.g., glucose level) evolves continuously, while measurements are only taken intermittently. The clinical goal, however, is to regulate the state at all times \citep{emerson2023offline, cobelli2009diabetes, battelino2019clinical}. RL in the physical world thus poses a substantially more challenging objective than its classical counterpart. Second, many physical systems exhibit structured and smooth continuous-time dynamics, a feature that is largely absent in digital environments and not exploited by the MDP framework. RL in the physical world is also known as continuous-time reinforcement learning (CTRL) \citep{wang2020reinforcement}.

Off-policy learning is equally critical in real-world applications \citep{watkins1992q, sutton1998reinforcement, ma2020off}. Off-policy data consists of trajectories generated under a behavior policy that differs from the target policy, and the behavior policy may be unknown. Leveraging off-policy data can substantially improve sample efficiency, particularly in domains where data collection is costly, such as healthcare. Moreover, it enables the use of existing datasets, which is critical in settings where exploration is unsafe, such as autonomous driving.

The central question this paper studies is: what is a principled framework for CTRL when the system evolves according to an unknown stochastic differential equation (SDE), yet only discrete-time off-policy observations are available?

Existing work has largely followed two directions. The first approach seeks to reconstruct the underlying continuous-time dynamics from discrete observations and subsequently solve the induced optimal control problem using tools from stochastic control \citep{pmlr-v125-agarwal20b,pmlr-v139-yildiz21a,yang2014reinforcement,kamalapurkar2016model}. This approach has two appealing features. First, the system dynamics depend only on the state and action, therefore, it is independent of the policy, making it naturally suited for leveraging off-policy data. Second, it preserves the continuous-time structure of the problem. However, this approach also faces two fundamental limitations. First, identifying a continuous-time model from discrete observations is inherently ill-posed: in general, infinitely many continuous-time dynamics can induce the same discrete-time transition law \citep{zhu2025optimal}. As a result, model misspecification is unavoidable in practice and may lead to suboptimal policies. Second, the approach decomposes the problem into model identification and control optimization, each introducing its own source of error. When the ultimate goal is to find the optimal policy, explicitly estimating the underlying dynamics may be unnecessary and potentially inefficient.

The second approach instead reformulates CTRL as a discrete-time MDP and applies standard model-free off-policy RL algorithms, such as Q-learning or conservative Q-learning \citep{watkins1992q,kumar2020conservative}. These formulations rely solely on discrete-time transition dynamics, thereby avoiding the unidentifiability issues inherent in the first approach. In addition, model-free RL methods directly learn the optimal policy without explicitly estimating the underlying dynamics, providing a simple and broadly applicable framework across different systems. However, this approach also has two notable limitations. First, discrete-time RL algorithms do not exploit the SDE structure of the underlying dynamics. Instead, they treat all systems the same, leading to inefficient use of information and potentially suboptimal policies. More specifically, the learning error can be highly sensitive to both system parameters and reward functions \citep{zhu2025optimal}. Second, off-policy learning in the MDP framework is typically based on the action–value function $Q(s,a)$ \citep{watkins1992q}. As observed in \citep{jia2023q, tallec2019making, baird1994reinforcement}, in the continuous-time setting, the state-action $Q(s,a)$ function can be viewed as a small perturbation around the state value function $V(s)$, implying that its variation across actions is small, which makes it difficult to accurately distinguish between actions. To address this issue, these papers propose a rescaled advantage function $q(s,a)$ to amplify variation in the action space. However, this construction depends on $V$ and thus cannot be applied in the off-policy setting directly (see Section~\ref{sec:off-policy-rl} for a detailed discussion).

We tackle this challenge in two steps. First, we introduce a new definition of the state--action value function $Q$. In classical RL, a key advantage of the $Q$-function is its ability to handle off-policy data, whereas the value function $V$ does not share this property. This distinction arises because the discrete-time Bellman equation for $Q$ depends only on the transition distribution under a given action $a$, while the Bellman equation for $V$ depends on the transition induced by a policy $\pi$. Our first contribution is to define a continuous-time $Q$-function that preserves this advantage. Specifically, the proposed $Q$-function satisfies two properties: (i) it does not degenerate to the state value function $V$, unlike standard continuous-time limits of RL formulations; and (ii) its governing equation depends only on the dynamics under action $a$, not on any policy $\pi$.

The second step is to develop a new iterative algorithm to approximate the optimal value function $Q^*$. We begin with an algorithm for the setting where the underlying dynamics are known, and provide a rigorous error analysis with convergence rates under linear function approximation. Building on this, and leveraging the PhiBE framework~\citep{zhu2024phibe,zhu2025optimal}, we propose a continuous-time analogue of model-free Q-learning~\citep{watkins1992q}. This is the second key contribution of this paper. The proposed algorithm for off-policy continuous-time RL combines the strengths of continuous-time control and classical RL while avoiding their key limitations. Compared to the model-based control approaches, it relies only on discrete-time transition data and does not require explicit estimation of the underlying dynamics. In addition, it is a model-free algorithm in the sense that the update rule is invariant across different systems. Compared to standard RL algorithms, it explicitly exploits the SDE structure of the dynamics, integrating discrete-time observations with continuous-time evolution. Moreover, the resulting $Q$-function is not a vanishing perturbation of the value function $V$, which makes the formulation well-posed and numerically stable in the continuous-time limit.

\textbf{Contributions}. We summarize the main contributions as follows.
\begin{itemize}
    \item We introduce a new state--action value function $Q$ for CTRL and derive its governing equation, which enables off-policy learning in continuous time.
    \item We develop an iterative algorithm to approximate the optimal state--action value function under known dynamics, and establish error bounds and convergence rates under linear function approximation that remain well-posed as the time discretization tends to $0$.
    \item 
    When the dynamics are unknown, we propose \textit{PhiBE-Q-Learning}, a model-free algorithm that exploits the SDE structure of CTRL problems to make effective use of off-policy data. The algorithm accommodates one-step transition data and requires parametrizing only the $Q$ function itself.
\end{itemize}

\subsection{Related work}
\subsubsection{Standard Discrete-time RL}

Classical reinforcement learning (RL) methods such as Q-learning \citep{watkins1992q}, DQN \citep{mnih2015human}, provide a powerful framework for off-policy learning in discrete-time setting. Our approach draws inspiration from these algorithms. Our update rule is an analogue of Q-learning \citep{watkins1992q} in continuous-time. 

In discrete-time Q-learning, convergence typically relies on the discount factor $\gamma<1$, which ensures that the optimal Bellman operator is a contraction mapping \citep{bertsekas2012dynamic}. In the continuous-time setting, the corresponding discount factor takes the form $\gamma = e^{-\beta \Delta t} \to 1$ as the time discretization $\Delta t \to 0$. This limit would, in principle, destroy the contraction property and lead to ill-conditioned or diverging guarantees. In this paper, we refine the error bounds in terms of $\Delta t$. In particular, we establish convergence and stability results that are uniform in $\Delta t$. We show that, when the underlying dynamics follow an SDE, the additional regularity of the system yields well-conditioned guarantees even $\dt\to0$. Similar results are also derived in other CTRL papers under different settings \citep{zhu2024phibe,zhu2025optimal,mou2024bellman}.

\subsubsection{Continuous-time RL(CTRL)}
A number of recent works have investigated continuous-time reinforcement learning (CTRL), where the system dynamics are modeled by SDEs \citep{jia2022policy-eval, jia2022policy-gradient, szpruch2024optimal, wang2020continuous, kim2020hamilton}.
In particular, \citep{wang2020reinforcement} connects CTRL with stochastic control with several extensions developed in subsequent works \citep{jia2022policy-gradient, szpruch2024optimal}.
These methods typically enjoy strong theoretical guarantees and exploit the SDE structure when continuous-time data are available. However, they face three major limitations in settings with discrete-time off-policy data. First, when applied to discrete-time observations, these algorithms do not fully exploit the SDE structure of the underlying dynamics. Second, many approaches depend on long trajectories spanning from an initial state to a terminal state, which restricts applicability when only fragmented or short trajectories are available. Finally, the $q$ function defined in \citep{jia2023q,tallec2019making, baird1994reinforcement} remains dependent on the value function $V$, preventing straightforward use of off-policy data. These limitations motivate the development of methods that can learn directly from discrete observations, and leverage off-policy samples in CTRL.

\subsubsection{PhiBE}
To enable learning directly from discrete trajectory data, \citep{zhu2024phibe} proposed the PhiBE framework, which explicitly accounts for discretization effects and leverages the SDE structure. By doing so, PhiBE improves the accuracy and efficiency of learning from discrete trajectory data in stochastic continuous-time systems. At the same time, PhiBE still leaves certain aspects unaddressed. In particular, existing PhiBE algorithms do not fully support learning from off-policy data. This work extends the PhiBE framework to address the off-policy data utilization. Our method aims to find the solution of PhiBE instead of the original HJB equation, thereby retaining PhiBE's key advantage of handling the discrete trajectory data. Moreover, by introducing a new definition of the continuous-time action-value function $Q$, we derive an analogue of the optimal Bellman equation on $Q$ independent of policy $\pi$. This formulation allows the algorithm to utilize trajectories generated under different policies. 

Finally, we emphasize that our attention in this paper is restricted to the convergence rate when there are enough data. The finite sample error \citep{zhao2025sample,muehlebach2025sample,zhu2024phibe} is left for future work.

\subsubsection{Classical HJB solvers}

Our iterative method under known dynamics shares the same intuition as classical numerical schemes for HJB equation \citep{schaeffer2016accelerated, feng2013recent}. However, the key difference lies in the formulation and convergence mechanism. Traditional finite-difference methods approximate the HJB equation by discretizing the state space on a grid, which requires careful mesh design and often imposes CFL-type constraints on the grid spacing to ensure stability and convergence. In contrast, our approach avoids explicit spatial discretization by approximating the Galerkin projection directly from trajectory data. This eliminates the need for grid-based computation and CFL conditions, making the method more flexible and better suited for data-driven implementations in CTRL.

Moreover, based on the PhiBE framework, our method could extend to a model-free algorithm when the dynamics are unknown. Specifically, it can directly approximate the action-value function without requiring explicit knowledge of the system dynamics. Here “model-free” means that the update rule remains unchanged across different forms of underlying dynamics, allowing the algorithm to adapt automatically without model-specific modifications.

\subsection{Notation and Organization}
\paragraph{Notation} We define weighted $L^2$-norm as follows
\[
\la f, g\ra = \int f(s,a) g(s,a) \mu(s,a) dsda, \quad \ll f \rl^2 =\la f, f \ra.
\]
Here, $\mu(s,a)$ denotes the probability density of the off-policy data distribution, supported on a compact set $\Omega \subset \mathbb{R}^{d}\times \mathbb{R}^{m}$.

For vector $x\in \R^n$, and function $f(s,a)$, we define $\ll \cdot \rl_2, \ll \cdot \rl_\infty$ as follows,
\[
\ll x \rl^2_2 = \sum_{i=1}^n x_i^2, \quad \ll f \rl_\infty = \sup_{(s,a)\in\O} \ll f \rl_2
\]

If no specific remarks are given, then $\nb, \nb^2$ refer to $\nb_s, \nb^2_s$.

\paragraph{Organization.}
Section~\ref{sec:preliminary} introduces the problem setup. 
Section~\ref{sec:off-policy-rl} presents the new state--action $Q$-function. 
Section~\ref{sec:off-policy-known} develops the algorithm under known dynamics and establishes its theoretical guarantees, while Section~\ref{sec:off-policy-phibe} derives the model-free algorithm for unknown dynamics. 
Section~\ref{sec:4.3} reports numerical experiments that validate the theory.

\section{Preliminary}\label{sec:preliminary}
\subsection{Stochastic optimal control}
We consider a continuous-time stochastic optimal control problem defined on a filtered probability space $(\Omega, \mathcal{F}, \{\mathcal{F}_t\}, \mathbb{P})$. The state process $\{s_t\}_{t \ge 0}$ takes values in $\mathbb{R}^d$ and evolves according to a controlled stochastic differential equation (SDE):
\begin{equation}\label{dyn}
    ds_t = b(s_t, a_t)\, dt + \sigma(s_t, a_t)\, dB_t,
\end{equation}
where $b: \mathbb{R}^d \times \mathcal{A} \to \mathbb{R}^d$ is the drift term, $\sigma: \mathbb{R}^d \times \mathcal{A} \to \mathbb{R}^{d \times k}$ is the diffusion term, and $B_t$ is a $k$-dimensional standard Brownian motion.  
At time $t$, the agent takes an action $a_t =\pi(s_t)$ according to a feedback control. Here the feedback control is defined as a deterministic mapping 
$\pi : \mathbb{R}^d \to \mathcal{A}$ from the state space to a compact action space $\mathcal{A} \subseteq \mathbb{R}^m$.  
Given a fixed policy $\pi$ and initial state $s_0 = s$, the value function is defined as
\begin{equation}\label{def of V-pi}
    V^\pi(s) = \mathbb{E}\left[ \int_0^\infty e^{-\beta t} r(s_t, \pi(s_t))\, dt \;\middle|\; s_0 = s \right],    
\end{equation}
where $\beta > 0$ is the discount rate. The goal of stochastic optimal control problem is to find the optimal feedback control that maximizes the value function,
\begin{equation}\label{sop-goal}
    \pi^*(s) = \argmax_\pi\, V^\pi(s), \quad V^*(s) = V^{\pi^*}(s).
\end{equation}

We assume the following assumption to ensure the well-posedness of the above stochastic control problem. 
\begin{assumption}\label{ass}
The action space $\mA$ is compact; the functions $b(s,a)$, $\sigma(s,a)$, and $r(s,a)$ are continuous in $a$, and locally uniformly Lipschitz continuous in $s$. The reward function $r(s,a)$ is uniformly bounded.
\end{assumption}
Moreover, the optimal value function $V^*(s)$ and the optimal feedback policy $\pi^*$ defined in \eqref{sop-goal}  satisfy
\[
\beta V^*(s) = \sup_{a\in\mA}[r(s,a) + (\mL V^*)(s,a)],\quad 
\pi^*(s) = \arg\sup_{a\in\mA}[r(s,a) + (\mL V^*)(s,a)],
\]
Here
\begin{equation}
\mL = b(s,a)\cdot \nabla_s + \frac{1}{2}\Sigma(s,a) :\nabla_s^2\label{eq:PE}
\end{equation}
with $\Sigma(s,a) = \sigma(s,a)\sigma^\top(s,a) \in \R^{d\times d}$ and $\Sigma(s,a) : \nabla_s^2 = \Sigma_{i,j}\Sigma(s,a)_{ij}\partial_{s_i}\partial_{s_j}$. We refer readers to \citep{FS06, YZ99, P09} for detailed accounts of the classical stochastic control theory.

\subsection{Reinforcement learning}
In the setting of reinforcement learning, the dynamics of the environment, i.e., $b(s,a), \sigma(s,a)$ are unknown.
We assume access to discretized information, either in the form of transition densities
\[
    \rho_{\Delta t}(s' \mid s,a), \qquad \mathbb{R}^d \times \mathbb{R}^d \times \mathcal{A} \to [0,\infty),
\]
which denotes the probability density function of $s_{\Delta t}$ given $s_0 = s$ and $a(\tau) = a$ for $\tau \in (0,\Delta t)$, or a dataset of transitions
\begin{equation}\label{data}
\{(s_i, a_i, s'_i, r_i)\}_{i=1}^n, \qquad (s_i, a_i)\sim \mu(s,a), \quad s'_i \sim \rho_{\Delta t}(\,\cdot \mid s_i,a_i), \quad r_i = r(s_i, a_i).    
\end{equation}
Here the actions $\{a_i\}_{i=1}^n$ are generated by some (possibly unknown) behavior policy and are not controlled by the learner. Note that we consider a piecewise constant control where the action is frozen at the left endpoint, i.e., $a_t = a_0$ for $t \in [0,\Delta t]$ in the data set, while the goal is still to find the optimal feedback control $\pi^*(s)$ defined in \eqref{sop-goal}.

\subsection{PhiBE}\label{sec:2.2}
As indicated in \citep{zhu2024phibe,zhu2025optimal}, the optimal-PhiBE provides a better approximation to the continuous-time optimal control problem compared to the Bellman equation when the underlying dynamics are smooth or the reward function oscillates, as it preserves the underlying SDE structure. Therefore, in this work, we adopt PhiBE to approximate the value function. The first-order Optimal-PhiBE corresponding to \citep{zhu2025optimal} is given by 
\begin{definition}[PhiBE]
\begin{equation}\label{optimal-phibe}
  \beta \hat{V}^*(s)
=
\sup_{a \in \mathcal{A}}
\left\{
    r(s,a)+
     \mathcal{L}_{\hat{b}, \hat{\Sigma}} \hat{V}^*(s)
\right\},  
\end{equation}
where
\begin{equation}\label{phibe-bsigma}
\begin{aligned}
\hat{b}(s,a)&=
\frac{1}{\Delta t}
\int (s' - s)\rho_\dt(s'|s,a)ds'\\
\hat{\Sigma}(s,a)
&=
\frac{1}{\Delta t}
\int(s'- s)(s' - s)^\top \rho_\dt(s'|s,a)ds'
\end{aligned}    
\end{equation}
\end{definition}
As shown in Theorem 3.5 of \citep{zhu2025optimal}, under suitable regularity conditions on the dynamics $b$ and $\sigma$, the optimal policy $\hat{\pi}^*(s)$ derived from PhiBE achieves an $O(\dt)$ approximation to the true optimal policy, when evaluated in terms of the true value function under the respective policies. We will use the PhiBE formulation in Section \ref{sec:off-policy-phibe} when only discrete-time information are available.

\section{PhiBE Q-Learning}\label{sec:off-policy-rl}

In this section, we aim to find a method that leverages the off-policy data. Off-policy data refers to experience collected under a behavior policy that is different from the target policy being optimized, in which the behavior policy may be unknown. Off-policy learning plays a crucial role in continuous-time reinforcement learning (CTRL). By allowing the use of data generated under arbitrary behavior policies, it significantly improves sample efficiency and makes it possible to reuse trajectories, which is especially important when collecting new data is costly. Furthermore, in many practical applications such as healthcare or autonomous driving, exploration is inherently risky. Off-policy learning provides a way to leverage existing datasets without requiring unsafe interactions. It also helps reduce the limitation caused by using data only from the current policy, enabling learning to reach an optimal policy. Finally, the ability to separate safe data collection from optimal policy optimization highlights the practical necessity of off-policy methods for CTRL, motivating our extension of PhiBE to this setting.

In standard RL, the key reason why $Q(s,a)$ naturally supports off-policy learning is because the Bellman equation 
\begin{equation}\label{discrete-time be}
 Q^\pi(s,a) = r(s,a) + \gamma \, \E_{s'\sim \rho(\cdot|s,a)} \l( \E_{a'\sim\pi(\cdot|s) }\l[ Q^\pi(s',a') \r]\r)   
\end{equation}
is determined by the unknown transition dynamics $\rho(\cdot|s,a)$ independent of the target policy $\pi$. While the Bellman equation for the value functions $V^\pi(s)$
\[
V^\pi(s) = r^\pi(s) + \gamma \, \E_{s'\sim \rho^\pi(\cdot|s)} \l( V^\pi(s') \r) 
\]  
is determined by the unknown marginal transition kernel  $\rho^\pi(s'|s) \;=\; \int \pi(a|s)\,\rho(s'|s,a)\, da$, which explicitly depends on $\pi$. As a result, when data are generated from a different behavior policy, the estimation of $V^\pi$ is inherently biased if the off-policy data is directly applied. In contrast, the Bellman optimality equation for $Q^*(s,a)$ is conditioned directly on $(s,a)$, and the transition kernel $\rho(s'|s,a)$ depends only on the environment dynamics, not on $\pi$. Thus, any sample $(s,a,s',r)$ collected under an arbitrary behavior policy provides an unbiased estimate of the right-hand side of the Bellman equation, making off-policy learning feasible for $Q$.  

Although the Bellman equation for $V^\pi(s)$ can, in principle, be adapted to off-policy learning by applying importance sampling to correct for the mismatch between the target policy $\pi$ and the behavior policy, this approach faces two challenges in practice. First, importance sampling often leads to high variance, especially in long-horizon problems or when the behavior and target policies differ substantially. Second, the behavior policy itself may be unknown, making it difficult to compute the required importance sampling ratios. These limitations motivate the use of $Q$-functions, whose Bellman equation is naturally off-policy without requiring such corrections.

A direct generalization of the discrete-time action-value function $Q^\pi(s,a)$ to the continuous-time setting is not feasible. As proved in \citep{jia2023q,tallec2019making}, the discrete-time action-value function $Q_{\Delta t}^\pi(s,a)$ defined as
\begin{equation}\label{def:q-dt}
Q_{\Delta t}^\pi(s,a) = 
\E_{\substack{
a_i \sim \pi(\cdot|s_i), i\geq1\\
s_{i+1} \sim \rho_{\Delta t}(\cdot \mid s_i, a_i), i\geq 0
}} \Big[\sum_{i = 0}^\infty\gamma^ir(s_i,a_i)|s_0 = s, a_0 = a \Big]    
\end{equation}
will converge to $V^\pi$ as $\Delta t \to 0$. The reason is that, as the discretization step $\Delta t \to 0$, the duration over which an action influences the environment also vanishes. In this limit, the effect of choosing action $a$ at state $s$ becomes negligible, and the action-value function $Q^\pi(s,a)$ effectively degenerates to the value function $V^\pi(s)$. 

In \citep{jia2023q, tallec2019making}, the authors propose an action-value function defined as
\begin{align*}
    q^\pi(s,a) := r(s,a) + [\mL V^\pi](s,a) - \beta V^\pi(s).
\end{align*}
This formulation, however, depends explicitly on $V^\pi$, whose estimation typically relies on on-policy transitions generated by $\pi$. We note that both \citet{jia2023q} and \citet{tallec2019making} propose off-policy algorithms for jointly estimating $q^\pi$ and $V^\pi$. Nevertheless, these approaches have two limitations when applied to discrete-time data. First, they require parameterizing two separate functions, $q^\pi$ and $V^\pi$, thereby introducing two sources of approximation error. Second, the resulting algorithms with discrete-time data do not fully exploit the smooth SDE structure. 

Therefore, in order to handle the off-policy data within one value function and embed the SDE structure, we introduce a modified definition of the action-value function in the CTRL framework.

In Sections~\ref{sec:4.1.1} and \ref{sec:off-policy-known}, we begin by analyzing the known-dynamics setting, where the continuous transition dynamics $(b, \Sigma)$ are assumed to be known. Moreover, we derive an iterative algorithm and establish the exponential convergence of the proposed scheme. Section~\ref{sec:4.2.1} further demonstrates that PhiBE allows us to extend the analysis from known dynamics to the unknown-dynamics case, where only discrete transition dynamics $\rho_{\Delta t}(s'|s,a)$ are available. Finally, Section~\ref{sec:4.2.2} also presents a model-free, off-policy data-driven algorithm that implements the proposed method when the dynamics are unknown and only discrete-time data is available, followed by numerical experiments in Section~\ref{sec:4.3} to demonstrate the performance of the approach.

\subsection{Continuous-time state-action value function $Q(s,a)$}\label{sec:4.1.1}
\begin{definition}[Continuous-time Bellman equation for $Q^\pi(s,a)$]\label{def of Q-pi}
\begin{equation}\label{pde of q}
Q^\pi(s,a) = r(s,a) + \mL \l[\E_{a\sim\pi} (Q^\pi)\r](s,a) + (1-\beta) \E_{a\sim\pi} [Q^\pi(s,a)]
\end{equation}

\end{definition}
Several remarks on the definition of $Q^\pi$ in \eqref{pde of q} are in order.

First, comparing \eqref{pde of q} with the continuous-time Bellman equation for $V^\pi$ defined in \eqref{def of V-pi},
\[
\beta V^\pi(s) = r^\pi(s) + \mathcal{L}_{b^\pi,\Sigma^\pi} V^\pi(s), 
\quad b^\pi(s) = \E_{a\sim\pi}[b(s,a)], 
\quad \Sigma^\pi(s) = \E_{a\sim\pi}[\Sigma(s,a)],
\]
we see a key structural difference. The operator in \eqref{pde of q} is governed by the dynamics $(b(s,a), \Sigma(s,a))$, and therefore does not depend on the target policy $\pi$. This mirrors the discrete-time Bellman equation for $Q_{\Delta t}$ in \eqref{discrete-time be}. In contrast, the dynamics for $V^\pi$ depend on $(b^\pi, \Sigma^\pi)$ and thus on $\pi$. Consequently, when the dynamics are unknown, $Q^\pi$ naturally accommodates off-policy data, whereas $V^\pi$ does not.

Second, the continuous-time $Q^\pi$ can also be viewed as the sum of the value function and the instantaneous advantage function:
\[
Q^\pi(s,a) = \lim_{\Delta t\to 0} \left[\frac{Q^\pi_{\Delta t}(s,a) - V^\pi_{\Delta t}(s)}{\Delta t}\right] + V^\pi(s),
\]
where $Q^\pi_{\Delta t}$ is defined in \eqref{def:q-dt} and $V^\pi_{\Delta t}(s) = \E_{a\sim\pi}[Q^\pi_{\Delta t}(s,a)]$. As shown in \citep{jia2023q}, instantaneous advantage function is given by
\begin{equation}\label{def of q}
q^\pi(s,a) := r(s,a) + [\mathcal{L} V^\pi](s,a) - \beta V^\pi(s).
\end{equation}
Accordingly, $Q^\pi$ can be written as
\begin{equation}\label{eq-Q-2}
Q^\pi(s,a) = r(s,a) + [\mathcal{L} V^\pi](s,a) - \beta V^\pi(s) + V^\pi(s).
\end{equation}
Taking expectation over $a \sim \pi$ yields
\[
\E_{a\sim\pi}[Q^\pi(s,a)] 
= r^\pi(s) + \mathcal{L}_{b^\pi,\Sigma^\pi} V^\pi(s) - \beta V^\pi(s) + V^\pi(s)
= V^\pi(s),
\]
where the last equality follows from the Bellman equation for $V^\pi$. Substituting $V^\pi(s) = \E_{a\sim\pi}[Q^\pi(s,a)]$ into \eqref{eq-Q-2} recovers the continuous-time Q-Bellman equation \eqref{pde of q}.

Third, we adopt the formulation in Definition \ref{def of Q-pi}, rather than \eqref{eq-Q-2}, because it expresses the equation entirely in terms of the $Q$-function. This avoids the need to explicitly compute $V^\pi$ and is essential for  off-policy learning.

Next, we define the continuous-time optimal Bellman equation for $Q^*(s,a)$ as follows.
\begin{definition}[Continuous-time optimal Bellman equation for $Q^*(s,a)$]\label{def of Q-pde}
\begin{equation}\label{pde of Qstar}
    \beta Q^*(s,a) = r(s,a) + \mL \big[\max_{a'} Q^*(s,a')\big] .
\end{equation}
\end{definition}

To verify that \eqref{pde of Qstar} is consistent with the standard RL relation $V^*(s) = \max_{a} Q^*(s,a)$, we provide a formal proof below.
Define the operators
\begin{align}\label{operator-Q}
T_Q(Q) &=\frac{1}{\beta}\l( r(s,a) + \mL \big[\max_{a} Q(s,a)\big] \big]\r),\\
T_V(V) &= \frac1\beta\max_{a} \Big(r(s,a) + [\mL V](s,a) \Big).
\end{align}
Let $Q^*$ denote the fixed point of $T_Q$, i.e., $T_Q(Q^*) = Q^*$. Define $\hat{V}(s) = \max_{a} Q^*(s,a)$. Then
\begin{align*}
    T_V(\hat{V}) 
    &= \frac{1}{\beta}\max_{a}\Big(r(s,a) + [\mL \hat{V}](s,a)\Big) = \max_{a}\frac1\beta\Big(r(s,a) + \mL \big[\max_{a'} Q^*(s,a')\big]\Big) \\
    &= \max_{a} Q^*(s,a) = \hat{V}.
\end{align*}
Hence, $\hat{V}$ is a fixed point of the operator $T_V$. Since $V^*$, the optimal value function, is also the unique fixed point of $T_V$ (by the existence and uniqueness of the solution to the HJB equation), we conclude that $\hat{V} = V^*$. Consequently, the identity $V^*(s) = \max_{a} Q^*(s,a)$ holds.

We summarize several key properties of the continuous-time $Q^\pi$ and $Q^*$ as follows:
\begin{itemize}

\item The function $Q^\pi(s,a)$ admits the equivalent definition
\[
Q^\pi(s,a) = \lim_{\Delta t \to 0} \left[\frac{Q^\pi_{\Delta t}(s,a) - V^\pi_{\Delta t}(s)}{\Delta t}\right] + V^\pi(s).
\]

\item $Q^\pi(s,a)$ induces the same policy improvement as $q^\pi(s,a)$, under both the greedy and softmax policy update rules,
\[
\arg\max_a q^\pi(s,a) = \argmax_a Q^\pi(s,a), \quad \pi(a|s) = \frac{\exp( \lambda q^\pi(s,a))}{Z_q(s)} = \frac{\exp( \lambda Q^\pi(s,a)) }{Z_Q(s)}
\]
where $Z_q, Z_Q$ are the normalizing constant.
\item The solution to the continuous-time $Q$-Bellman equation \eqref{pde of q} is consistent with the value function $V^\pi$ in \eqref{def of V-pi}, in the sense that
\[
\E_{a \sim \pi}[Q^\pi(s,a)] = V^\pi(s).
\]

\item The solution to the continuous-time optimal $Q$-Bellman equation \eqref{pde of Qstar} satisfies
\[
\max_{a} Q^*(s,a) = V^*(s) = \max_{\pi} V^\pi(s) = \max_{\pi} \E_{a \sim \pi}[Q^\pi(s,a)].
\]

\end{itemize}

\subsection{Q-learning under Known Dynamics}\label{sec:off-policy-known}
In this section, we introduce an iterative algorithm to solve  \eqref{pde of Qstar} for $Q^*(s,a)$ under linear bases. Let $\Phi(s): \mathcal{S} \to \mathbb{R}^n$ be a set of basis functions. We approximate the solution by $\hat{Q}(s,a) = \Phi(s,a)^\top \th^*$ using Galerkin method. The Galerkin approximation $\hat{Q}(s,a)$ satisfies, 
\begin{equation}\label{galerkin-solu}
    \la r(s,a) + \mL \l(\max_a \l[\Phi(s,a)^\top\th^*\r]\r) + (1-\beta) \max_a\l[\Phi(s,a)^\top\th^*\r] - \Phi(s,a)^\top\th^*,\Phi(s,a)\ra= 0
\end{equation}
\begin{equation}\label{galerkin-solu}
    \la r(s,a) + \mL \l(\max_a \l[\Phi(s,a)^\top\th^*\r]\r) -\beta \Phi(s,a)^\top\th^*,\Phi(s,a)\ra= 0
\end{equation}

However, directly solving the above equation for $\th^*$ is infeasible. We add a time dependence on $\theta(t)$, and derive an ODE for $\th(t)$
\begin{equation*}
\begin{aligned}
 \frac{d}{dt}\theta(t) 
=&
\big\langle 
r(s,a) 
+ \mL \Big(\max_a \big[\Phi(s,a)^\top \theta(t)\big]\Big) - \beta\Phi(s,a)^\top \theta(t),
\ \Phi(s,a)
\big\rangle       
\end{aligned}
\end{equation*}
Discretize the time evolution, one has
\begin{equation}\label{eq:newalgoite}
\begin{aligned}
\theta_{n+1} =& \theta_n  + \alpha_n \la  r(s,a) + \mL \Big(\max_a [\Phi(s,a)^\top \theta_n]\Big)-\beta\Phi(s,a)^\top \theta_n, \ \Phi(s,a) \ra
\end{aligned}
\end{equation}

We establish the properties of the proposed algorithm in two steps. First, Theorem~\ref{thm:galerkin} shows that, under Assumption~\ref{ass-conv}, the Galerkin solution $\theta^*$ defined in \eqref{galerkin-solu} provides a good approximation to the true value function $Q^*(s,a)$ defined in \eqref{pde of Qstar}. Second, Theorem~\ref{thm:galerkin_convergence} shows that the iterates $\theta_n$ generated by Algorithm~\eqref{eq:newalgoite} converge to the Galerkin approximation $\theta^*$.

\begin{assumption}\label{ass-conv}
\begin{itemize}
     \item [(A1)] The smallest eigenvalue $\lam_1$ of the Gram matrix $G = \int \Phi\Phi^\top \mu dsda$  is positive. The largest eigenvalue $\lam_2$ of the Gram matrix is bounded, and the bases $\ll\Phi\rl_\infty$ are bounded. In addition, the largest eigenvalue of Gram matrix $G_\nb, G_{\nb^2}$ of $\nb\Phi$, $\nb^2\Phi$ are bounded.
     \item[(A2)] $\| b\|_{C^1} , \|\Sigma\|_{C^2} \leq L$
     \item[(A3)]    Let $c_0 := \max_{v\in \mathbf{S}^{n-1}}\frac{\|\max_a |v^\top \Phi|\|}{\|\Phi^\top v\|}$, $c_1 := \max_{v\in \mathbf{S}^{n-1}}\frac{\|\nabla\Phi^\top v  \|}{\|\Phi^\top v\|}, c_2 := \max_{v\in \mathbf{S}^{n-1}}\frac{\|\nabla^2\Phi^\top v  
           \|}{\|\Phi^\top v\|}$. We assume that 
    \begin{equation}\label{def of cbeta}
        \beta > C_{L,\Phi}: = c_0 L (1+c_1+c_2)
    \end{equation}

\end{itemize} 
\end{assumption}

Several remarks on the assumptions are in order. Assumption (A1) constrains the choice of basis functions $\Phi(s)$, Assumption (A2) imposes conditions on the underlying dynamics, and Assumption (A3) requires the discount parameter $\beta$ to be large enough.

In Assumption (A1), the requirement that the smallest eigenvalue of the Gram matrix be strictly positive is equivalent to linear independence of the basis functions in the weighted space $L^2(\mu)$.

In Assumption (A3), we note that the constants $c_0, c_1, c_2$ are always finite under Assumption (A1). For $c_0$, since $\ll v^\top \Phi \rl = v^\top G v \geq \lam_1$ for $\ll v \rl_2 = 1$, one has $c_0 \le \frac{\|\Phi\|_\infty}{\sqrt{\lambda_1}}.$ For $c_1, c_2$,  let $\lambda_3$ and $\lambda_4$ denote the largest eigenvalues of the matrices $G_{\nabla}$ and $G_{\nabla^2}$, respectively. Then one can further bound $c_1 \leq \frac{\lambda_3}{\lambda_1}, \; c_2 \leq \frac{\lambda_4}{\lambda_1}$.

The discounted coefficient $\beta$ and the constant $C_{L,\Phi}$ are the key quantities that governing the performance of the algorithm. Later in Theorems \ref{thm:galerkin} and \ref{thm:galerkin_convergence},  we will show that both the approximation accuracy and convergence behavior improve as $\beta$ becomes larger or $C_{L,\Phi}$ becomes smaller. Since $C_{L,\Phi}$ depends on $c_0, c_1, c_2$
through $L$, and smaller $c_0, c_1, c_2$ weakens the requirement on $\beta$
while improving algorithmic performance, we discuss in the following the
conditions under which $c_0, c_1, c_2$ are small.

\paragraph{Uniform exploration of the behavior policy controls $c_0$.} Let $\mu(s) = \int_\A \mu(s,a) da$ be the marginal distribution over states, and define the conditional
distribution $\pi(a|s)$ s.t. $\mu(s,a) = \mu(s)\pi(a|s)$, which corresponds to the behavior policy generating the data. We claim that the uniform behavior policy $\pi^u(a|s) = 1/|\mathcal{A}|$ minimizes the constant $c_0$ in the worst case over all
bounded bases $\Phi$.
Any other policy $\pi \neq \pi^u$ leaves some region of the action space
systematically under-covered relative to $\pi^u$, and a basis functions can
always be constructed that places its informative directions precisely in
that region, inflating $c_0(\pi)$ above $c_0(\pi^u)$.

The precise statement is presented in the following lemma, and the proof is given in Appendix \ref{sec:proof of uniform-optimal}.
\begin{lemma}\label{lemma:uniform-optimal}
The uniform policy $\pi^u$ is the unique minimax optimizer to
\begin{equation*}
\pi^u = \operatorname*{argmin}_{\pi \in \Pi}\;
\max_{\Phi \in \mathcal{F}_B}\; c_0(\pi, \Phi)^2.
\end{equation*}
where $\Pi := \Bigl\{\pi(\cdot|\cdot) : \pi(a|s) \geq 0,\;
\int_\mathcal{A}\pi(a|s)\,da = 1 \quad \mu(s)\text{-a.e.}\Bigr\}$, $\mathcal{F}_B := \{\Phi \text{ continuous} : \|\Phi\|_\infty \leq B\}$.    
\end{lemma}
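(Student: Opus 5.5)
The plan starts from a structural observation about $c_0$ and then uses a rearrangement argument in the action variable. Write $\mu(s,a)=\mu(s)\pi(a|s)$. The numerator in the definition of $c_0$ is
\[
\|\max_a |v^\top\Phi|\|^2=\int \mu(s)\Bigl(\max_{a\in\mA}|v^\top\Phi(s,a)|\Bigr)^2 ds,
\]
because the integrand depends only on $s$ and $\int\pi(a|s)\,da=1$. So the numerator $N_\Phi(v)$ does not depend on the behavior policy $\pi$. Only the denominator
\[
D_{\pi,\Phi}(v)=\int\mu(s)\int_{\mA}\pi(a|s)\,(v^\top\Phi(s,a))^2\,da\,ds
\]
depends on $\pi$. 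Hence $c_0(\pi,\Phi)^2=\max_{v\in\mathbf S^{n-1}} N_\Phi(v)/D_{\pi,\Phi}(v)$. The lemma then reduces to a comparison of denominators: I would show that for every $\pi\neq\pi^u$ and every $\Phi\in\mathcal F_B$ there is a $\Phi'\in\mathcal F_B$ with $c_0(\pi,\Phi')\ge c_0(\pi^u,\Phi)$, with strict inequality for suitable $\Phi$. Taking the supremum over $\Phi$ then gives the claimed minimax optimality of $\pi^u$.

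\textbf{Step 1: an under-covered region.} Take $\pi\neq\pi^u$ on a set of positive $\mu(s)$-measure. Since both densities integrate to $1$, there are $\varepsilon>0$, a set $S_0$ with $\mu(S_0)>0$, and for each $s\in S_0$ a region $R_s\subset\mA$ of positive volume, chosen measurably in $s$ (for instance a small cube), such that
\[
\pi(a|s)\le(1-\varepsilon)/|\mA| \quad\text{on } R_s.
\]

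\textbf{Step 2: compress $\Phi$ into $R_s$.} Fix $\Phi\in\mathcal F_B$ and let $S_s:\mA\to R_s$ be an affine bijection onto a subcube. Define $\Phi'(s,a)=\Phi(s,S_s^{-1}(a))$ for $a\in R_s$, and $\Phi'=0$ for $a\notin R_s$. The maximum over $a$ is unchanged by this reparametrization, so $N_{\Phi'}=N_\Phi$ on $S_0$; off $S_0$ I keep $\Phi$, or simply restrict attention to $S_0$. The change of variables gives
\[
\int_{R_s}\pi\,(v^\top\Phi')^2\,da\;\le\;\frac{(1-\varepsilon)|R_s|}{|\mA|}\cdot\frac{1}{|\mA|}\int_{\mA}(v^\top\Phi)^2\,da .
\]
Therefore $D_{\pi,\Phi'}(v)\le(1-\varepsilon)\,\frac{|R_s|}{|\mA|}\,D_{\pi^u,\Phi}(v)$, uniformly in $v$. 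It follows that
\[
c_0(\pi,\Phi')^2\;\ge\;\frac{|\mA|}{(1-\varepsilon)|R_s|}\;c_0(\pi^u,\Phi)^2\;>\;c_0(\pi^u,\Phi)^2 ,
\]
and in particular $\sup_\Phi c_0(\pi,\cdot)\ge\sup_\Phi c_0(\pi^u,\cdot)$. The construction preserves $\|\Phi'\|_\infty\le B$.

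\textbf{Step 3: continuity.} The zero extension is not continuous. I would replace it by a cutoff $\chi_\delta(a)\,\Phi(s,S_s^{-1}(a))$, where $\chi_\delta$ is a continuous bump equal to $1$ on $S_s(\mA_\delta)$ for a slightly shrunken set $\mA_\delta$, and the compressed $\Phi$ is extended continuously near the boundary. The extra contribution to both $N$ and $D$ is $O(B^2\delta)$. Because the inequality in Step 2 has the fixed slack factor $(1-\varepsilon)^{-1}$, a small $\delta$ preserves the strict inequality. Uniqueness follows because any minimax optimizer must equal $\pi^u$ $\mu(s)$-a.e.; otherwise Step 2 applies to it.

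\textbf{Main obstacle.} The difficult point is making strictness survive the outer supremum over $\mathcal F_B$. If $\sup_\Phi c_0(\pi^u,\Phi)$ is infinite, a pointwise strict inequality does not give strict inequality of the suprema. I would handle this through the multiplicative gain in Step 2. Since $c_0(\pi,\Phi')^2\ge\kappa\,c_0(\pi^u,\Phi)^2$ with $\kappa=|\mA|/((1-\varepsilon)|R_s|)>1$ independent of $\Phi$, strictness holds whenever the uniform value is finite. If the uniform value is infinite, I would argue that optimality is meant in the sense of the comparison $\ge$, with uniqueness read through this uniform multiplicative gap; I would state this interpretation explicitly rather than leave it implicit.
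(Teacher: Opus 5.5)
\textbf{Main gap: the suprema are always infinite.} The gap is the one you flag as the ``main obstacle'', but it is not an edge case; it always occurs. For every $\pi\in\Pi$ one has $\sup_{\Phi\in\mathcal{F}_B}c_0(\pi,\Phi)^2=+\infty$. To see this, fix $a_0\in\mathcal{A}$ and let $U_\delta=\{a\in\mathcal{A}:|a-a_0|<\delta\}$. Let $\psi_\delta:\mathcal{A}\to[0,1]$ be a continuous bump with $\psi_\delta(a_0)=1$, supported in $U_\delta$. Take $\Phi$ constant in $s$, with first component $c\,\psi_\delta$ and any bounded continuous remaining components, scaled so that $\|\Phi\|_\infty\le B$. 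At $v=e_1$ the ratio defining $c_0$ is at least $1/\int_{U_\delta}\mathbb{E}_s[\pi(a|s)]\,da$. This tends to $+\infty$ as $\delta\to0$, because $\mathbb{E}_s[\pi(\cdot|s)]$ is integrable.

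So every policy has minimax value $+\infty$. The lemma as literally stated is false, since every $\pi$ is a minimizer, and your branch ``whenever the uniform value is finite'' never happens. Your proposed reinterpretation does not rescue uniqueness either. The same bump shows that for every $\pi$, every $\Phi$ with $c_0(\pi,\Phi)<\infty$ and every $\kappa>1$, there is $\Phi'\in\mathcal{F}_B$ with $c_0(\pi^u,\Phi')^2\ge\kappa\,c_0(\pi,\Phi)^2$. Hence the ``uniform multiplicative gap'' holds in both directions and does not distinguish $\pi^u$ from any other policy.

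A true statement needs a smaller class, e.g.\ the constraint $\lambda_{\min}(\bar H(\Phi))\ge\sigma$ from the paper's appendix corollary, which gives $\sup_\Phi c_0(\pi^u,\Phi)^2\le B^2|\mathcal{A}|/\sigma$. However, your compression multiplies $\bar H$ by $|R_s|/|\mathcal{A}|<1$ and so pushes $\Phi'$ out of such a class. Step~2 would therefore have to be rebuilt.

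\textbf{Comparison with the paper, and secondary holes.} The paper's proof does not get around this either. It only shows $\sup_\Phi c_0(\pi,\Phi)^2\ge c_0(\pi,\Phi^*)^2>c_0(\pi^u,\Phi^*)^2$ for a single $\Phi^*$, which does not compare the two suprema. Its set $A_-$ is built from the state-averaged density $\mathbb{E}_s[\pi(a|s)]$, which can be exactly uniform for a state-dependent $\pi\neq\pi^u$. And $\Phi^*=Bv_0\mathbf{1}_{A_-}$ is not continuous. Your state-dependent under-coverage, and your map $\Phi\mapsto\Phi'$ defined for every $\Phi$, are the right shape for comparing suprema. Still, Steps 1--3 have holes of their own that would matter in a restricted class.

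First, the set $\{a:\pi(a|s)\le(1-\varepsilon)/|\mathcal{A}|\}$ can have empty interior. For instance, let $\pi(\cdot|s)$ take one value above $1/|\mathcal{A}|$ on a dense open set of half the volume and one value below it elsewhere. Then a cube $R_s$ carrying the pointwise bound need not exist, let alone an affine copy of $\mathcal{A}$ inside it.

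Second, keeping $\Phi$ off $S_0$ leaves the denominator there uncontrolled. Restricting to $S_0$ instead changes the numerator, so you would be comparing with $c_0(\pi^u,\Phi\mathbf{1}_{S_0})$ rather than $c_0(\pi^u,\Phi)$.

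Third, a merely measurable choice $s\mapsto R_s$ makes $\Phi'$ discontinuous in $s$. The cutoff in Step~3 acts only in $a$ and does not repair this.
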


The minimax formulation in Lemma~\ref{lemma:uniform-optimal} is necessary
for the following reason. For a fixed known basis functions $\Phi$, a policy concentrating on actions
where $\Phi$ has high energy can achieve a smaller $c_0$ than $\pi^u$.
However, such a policy requires prior knowledge of the geometry of $\Phi$, precisely the object the offline RL algorithm is trying to learn.
The minimax formulation $\min_{\pi} \max_{\Phi} c_0(\pi, \Phi)^2$ captures
the natural requirement that the behavior policy must be chosen without this
knowledge. Under this criterion, $\pi^u$ is uniquely optimal: it is the only
policy that cannot be adversarially exploited by any choice of basis functions,
because it covers every action equally regardless of where $\Phi$ places its
informative directions.

\paragraph{Smoothness and orthonormality of the bases $\Phi$ controls $c_1$ and $c_2$.}
The constants $c_1$ and $c_2$ are relative smoothness constants of the basis
functions $\Phi$. They are controlled by two independent
mechanisms. First, the numerators $\|\nabla\Phi^\top v\|$ and
$\|\nabla^2\Phi^\top v\|$ are small when $\Phi$ has small derivatives across
$\mathcal{S}\times\mathcal{A}$,  which can be achieved by regularizing the derivatives, for instance through smoothness penalties or bounded weight norms
in a neural network parameterization. Second, the denominator
$\min_v\|\Phi^\top v\| = \lam_1$ is large when the
basis functions are orthonormal under $\mu$, so that no projection direction is starved of signal. Any
deviation from orthogonality under $\mu$ reduces $\lam_1$
and increases $c_1$ and $c_2$.

Before we present the theorem, we first prove a Lemma that will be frequently used in the theoretical guarantees.

\begin{lemma}\label{lemma:ineq}
Under Assumption \ref{ass-conv}, for any $Q = \Phi^\top \th$ in the linear space spanned by $\Phi$, one has the following upper bound,
\[
\ll \mL^*Q \rl \lesssim  L(1+c_1 + c_2)\ll Q \rl, \quad \text{where}\quad \mL^* f = \nb\cdot\l[ bf + \frac12\nb\cdot (\Sigma f)\r] .
\]
where the inequality holds up to a uniform constant that is independent of the problem. 
\end{lemma}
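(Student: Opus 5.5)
The plan is to expand $\mL^* Q$ by the product rule into terms in which at most two derivatives fall on $Q$ and the remaining derivatives fall on the coefficients. Each resulting term is then bounded in $L^2(\mu)$ using two facts: the $C^1$ bound on $b$ and the $C^2$ bound on $\Sigma$ from (A2), and the relative smoothness constants $c_1,c_2$ from (A3). The latter apply because $Q=\Phi^\top\th$ lies in the span of the basis.

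Concretely, I will write
\[
\mL^* Q = (\nb\cdot b)\, Q + b\cdot \nb Q + \tfrac12\Big[ (\nb\cdot\nb\cdot\Sigma)\, Q + 2\,(\nb\cdot\Sigma)\cdot \nb Q + \Sigma:\nb^2 Q\Big],
\]
where $\nb\cdot\Sigma$ denotes the vector with entries $\sum_i \partial_{s_i}\Sigma_{ij}$ and $\nb\cdot\nb\cdot\Sigma=\sum_{i,j}\partial_{s_i}\partial_{s_j}\Sigma_{ij}$. The coefficients multiplying $Q$ are $\nb\cdot b$ and $\nb\cdot\nb\cdot\Sigma$. Since $\|b\|_{C^1},\|\Sigma\|_{C^2}\le L$, each is bounded pointwise by a dimension-dependent multiple of $L$. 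The coefficients multiplying $\nb Q$ are $b$ and $\nb\cdot\Sigma$, and the coefficient multiplying $\nb^2 Q$ is $\Sigma$; these are likewise bounded by a multiple of $L$. Pulling the sup norms of the coefficients out of the weighted $L^2$ norm and applying the triangle inequality gives
\[
\|\mL^* Q\| \lesssim L\big(\|Q\| + \|\nb Q\| + \|\nb^2 Q\|\big).
\]

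Next I write $Q=\Phi^\top \th$ with $\th = \|\th\|_2 v$ and $v\in\mathbf S^{n-1}$. Then $\nb Q = \|\th\|_2\,\nb\Phi^\top v$ and $\|Q\| = \|\th\|_2\|\Phi^\top v\|$. By the definitions of $c_1$ and $c_2$ in (A3) this gives $\|\nb Q\|\le c_1\|Q\|$ and $\|\nb^2 Q\|\le c_2\|Q\|$. The case $\th=0$ is trivial. Combining the two bounds yields $\|\mL^*Q\|\lesssim L(1+c_1+c_2)\|Q\|$. The hidden constant depends only on the dimension $d$, through the number of entries in the matrix and vector contractions, and on the convention used for the $C^k$ norms. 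It does not depend on $\beta$, $\Phi$ or $\mu$.

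The step that needs care is the bookkeeping of the product rule. The weight $\mu$ appears only in the norm, not in $\mL^*$, so no derivative of $\mu$ arises and nothing needs to be integrated by parts. One must simply check that every derivative of order at most two falling on $b$ or $\Sigma$ is controlled by (A2); in particular $\Sigma$ needs two derivatives, which is exactly why (A2) assumes $\|\Sigma\|_{C^2}$. One must also interpret $\|\nb Q\|$ and $\|\nb^2 Q\|$ with pointwise Euclidean and Frobenius norms consistent with the definitions of $c_1$ and $c_2$. With these conventions fixed the argument is routine.
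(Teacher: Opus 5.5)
Your proposal is correct and follows essentially the same route as the paper: expand $\mL^*Q$ by the product rule, pull out the sup norms of $b,\Sigma$ and their derivatives via (A2) to get $\|\mL^*Q\|\lesssim L\big(\|Q\|+\|\nb Q\|+\|\nb^2 Q\|\big)$, then bound $\|\nb Q\|\le c_1\|Q\|$ and $\|\nb^2 Q\|\le c_2\|Q\|$ by applying the definitions of $c_1,c_2$ to the normalized direction of $\th$. Your explicit expansion, including the factor $2$ on the mixed term from the symmetry of $\Sigma$, and your note that the hidden constant depends on $d$ and on the norm conventions, are simply more careful versions of the paper's bookkeeping.
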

\begin{proof}
First note that
\[
\begin{aligned}
\ll \mL^*Q \rl
\leq &  \l(\ll \nb b\rl_\infty + \frac12\ll\nb^2 \Sigma\rl_\infty \r) \ll Q\rl + \l( \ll b \rl_\infty + \ll \nb\Sigma\rl\r)  \ll \nb Q\rl + \frac12\ll\Sigma\rl_\infty \ll \nb^2 Q  \rl \\
\lesssim& L \l(\ll Q  \rl + \ll \nb\Phi^\top \th\rl + \ll \nb^2\Phi^\top \th\rl \r) 
\leq L(1+c_1+c_2)\ll Q \rl
\end{aligned}    
\]
where the first inequality is due to Cauchy–Schwarz inequality; the second inequaltiy is by Assumption (A2); the third inequality is because $Q = \th^\top\Phi$ and for $\forall 0\neq \th\in \R^n$, one has
\[
\begin{aligned}
&\ll \nb\Phi^\top \th \rl =  \frac{\ll \nb\Phi^\top \th \rl }{\ll \Phi^\top \th \rl}  \ll \Phi^\top \th \rl \leq \max_{v\in \mathbf{S}^{n-1}} \frac{\ll \nb\Phi^\top v \rl }{\ll \Phi^\top v \rl}  \ll \Phi^\top \th \rl = c_1 \ll \th^\top \Phi \rl,
\end{aligned}
\] 
and similar inequality can be obtained for $\ll \nb^2\Phi^\top \th\rl \leq c_2 \ll Q \rl$.
\end{proof}

\color{black}
\begin{theorem}[Galerkin solution accuracy]\label{thm:galerkin}
Under Assumption \ref{ass-conv}, the Galerkin solution $\hat{Q}^*(s,a) = \Phi(s,a)^\top \th^*$ that satisfies \eqref{galerkin-solu} has an accuracy of
\[
\ll \hat{Q}^* - Q^* \rl \leq \frac{\beta + C_{L,\Phi}}{\beta - C_{L,\Phi}}  \min_{\th}\ll Q^* -\Phi^\top\th \rl
\]
where $C_{L,\Phi} = Lc_0(1+c_1+c_2)$ is a constant only depending on $\Phi, b, \Sigma$.
\end{theorem}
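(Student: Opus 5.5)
Let $P$ denote the $L^2(\mu)$-orthogonal projection onto $\mathrm{span}(\Phi)$ and write $M(Q)(s) := \max_a Q(s,a)$. The plan is a Céa/Strang-type argument: treat the Galerkin problem \eqref{galerkin-solu} as $\beta\hat Q^* = P\bigl(r + \mL M(\hat Q^*)\bigr)$ and compare it with the projected true equation $\beta PQ^* = P\bigl(r+\mL M(Q^*)\bigr)$. Let $\bar Q := \Phi^\top\bar\th$ be the best approximation, $\bar Q = PQ^*$, so that $\|Q^*-\bar Q\| = \min_\th\|Q^*-\Phi^\top\th\|$. By the triangle inequality,
\[
\|\hat Q^*-Q^*\|\le \|\hat Q^*-\bar Q\| + \|\bar Q-Q^*\| .
\]
It therefore suffices to show $\|\hat Q^*-\bar Q\|\le \frac{2C_{L,\Phi}}{\beta-C_{L,\Phi}}\|Q^*-\bar Q\|$, because $1+\frac{2C}{\beta-C}=\frac{\beta+C}{\beta-C}$.

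To get this bound, test the difference of the two equations against $w := \hat Q^*-\bar Q \in \mathrm{span}(\Phi)$. This gives
\[
\beta\|w\|^2 = \bigl\langle \mL\bigl(M\hat Q^* - MQ^*\bigr), w\bigr\rangle .
\]
Moving the generator onto the test function by integration by parts yields $\langle M\hat Q^*-MQ^*, \mL^* w\rangle$, where the weight $\mu$ is absorbed in the same way as in Lemma \ref{lemma:ineq}. Cauchy–Schwarz and Lemma \ref{lemma:ineq} then give
\[
\beta\|w\|^2\lesssim L(1+c_1+c_2)\,\|M\hat Q^*-MQ^*\|\,\|w\| .
\]
Since $|\max_a f-\max_a g|\le \max_a|f-g|$, we can split
\[
\|M\hat Q^*-MQ^*\|\le \|\max_a|w|\| + \|\max_a|\bar Q-Q^*|\| .
\]
The first term is at most $c_0\|w\|$ by the definition of $c_0$, since $w$ lies in the span. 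Putting this together gives $(\beta - C_{L,\Phi})\|w\|\le Lc_0'(1+c_1+c_2)\,\|Q^*-\bar Q\|$, which is the desired bound provided the second term is also controlled by $c_0\|Q^*-\bar Q\|$. Division by $\beta-C_{L,\Phi}$ is legitimate by Assumption (A3), \eqref{def of cbeta}.

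The main obstacle is exactly that second term, $\|\max_a|\bar Q-Q^*|\|$, because $\bar Q-Q^*$ is not in the span of $\Phi$ and $c_0$ does not directly apply to it. I would first try to avoid it by reorganizing the comparison so that the maximum only ever acts on elements of the span. For example, one can compare $\hat Q^*$ with $\bar Q$ through the fixed-point map $\theta\mapsto P\,T_Q(\Phi^\top\theta)$: show this map is a contraction on the span with factor $C_{L,\Phi}/\beta$, using the duality step above together with $c_0$. One then bounds the residual $\|P\,T_Q(\bar Q)-\bar Q\| = \|P(T_Q\bar Q - T_Q Q^*)\|$, again by duality against span elements, after which only $\|M\bar Q - MQ^*\|$ remains. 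If that still cannot be bounded by $c_0\|\bar Q-Q^*\|$, I would interpret $c_0$ in Assumption (A3) as applying to the relevant error (or use $\|\max_a|f|\|\le c_0\|f\|$ on $\mathrm{span}(\Phi)\oplus\{Q^*\}$), stating that convention explicitly. A second, milder issue is the boundary terms in the integration by parts and the $\mu$-weight when passing $\mL$ to $\mL^*$; I would handle these with the same convention used in Lemma \ref{lemma:ineq}, so that its bound applies verbatim.
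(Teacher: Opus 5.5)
Your strategy is the paper's. The paper also writes $Q^*-\hat Q^*=e_G+e_P$, where $e_P=Q^*-\Phi^\top u$ is the best-approximation error and $e_G=\Phi^\top(u-\theta^*)$ is your $-w$. It then tests the difference of the true and Galerkin equations against $e_G$, moves $\mathcal{L}_{b,\Sigma}$ onto $e_G$, and invokes Lemma~\ref{lemma:ineq} and $c_0$. Your bookkeeping is in fact better, because you use $\langle Q^*-\bar Q,w\rangle=0$. The paper instead bounds $-\beta\langle e_P,e_G\rangle$ by Cauchy--Schwarz, which yields only $\|e_G\|\le\frac{\beta+C_{L,\Phi}}{\beta-C_{L,\Phi}}\|e_P\|$, and then reads this as a bound on $\|e_G+e_P\|$. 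That reading needs exactly the orthogonality you use; without it the triangle inequality gives the constant $\frac{2\beta}{\beta-C_{L,\Phi}}$.

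The term you could not control is precisely where the paper's proof is invalid. The paper bounds $\|\max_a|e_P|\|\le c_0\|e_P\|$ by citing \eqref{ineq-1}, but that inequality holds only on $\mathrm{span}(\Phi)$. No reorganization can close this; your fixed-point variant ends at the same quantity $\|\max_a\bar Q-\max_aQ^*\|$. The reason is that the statement is false under (A1)--(A3). Here is a counterexample.

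Setup:
\begin{itemize}
\item Let everything be $2\pi$-periodic in $s\in\mathbb{R}$, with $\mu$ uniform on $[0,2\pi]\times[-1,1]$, $b\equiv 1$ and $\sigma\equiv0$.
\item Take $\Phi=(1,\cos s,\sin s)^\top$ and $r(s,a)=(1+\cos s)\chi_\epsilon(a)$, where $\chi_\epsilon$ is continuous, $0\le\chi_\epsilon\le1$, $\chi_\epsilon(0)=1$ and $\mathrm{supp}\,\chi_\epsilon\subset[-\epsilon,\epsilon]$.
\item Then $c_0=c_1=c_2=L=1$ and $C_{L,\Phi}=3$, so $\beta=4$ satisfies (A3). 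Periodicity and uniform $\mu$ make the integration by parts exact.
\end{itemize}

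The projection error is small. $V^*$ solves $(\beta-\partial_s)V^*=1+\cos s$ and lies in the span, and $Q^*=\beta^{-1}(r+\partial_sV^*)$. Hence $\min_\theta\|Q^*-\Phi^\top\theta\|\le\beta^{-1}\|r\|\le\beta^{-1}\sqrt{3\epsilon/2}$.

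The Galerkin error is not. Since $\Phi$ does not depend on $a$, $\max_a\hat Q^*=\hat Q^*$. Since $\beta-\partial_s$ preserves the span, \eqref{galerkin-solu} reduces to $(\beta-\partial_s)\hat Q^*=\bar\chi_\epsilon(1+\cos s)$, with $\bar\chi_\epsilon=\frac12\int\chi_\epsilon\,da\le\epsilon$. So $\hat Q^*=\bar\chi_\epsilon V^*\to 0$, while $Q^*\to\beta^{-1}\partial_sV^*\neq0$ in $L^2(\mu)$.

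Thus the left side of the theorem stays near $\beta^{-1}\|\partial_sV^*\|>0$ while the right side is $O(\sqrt{\epsilon})$. The culprit is exactly your term: $\|\max_a|e_P|\|/\|e_P\|$ is at least of order $\epsilon^{-1/2}$. Intuitively, the narrow rewarding action window has negligible $L^2(\mu)$ mass, yet it shifts $\max_a Q^*$ by $O(1)$.

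So your fallback is not a convention but a necessary additional hypothesis, for example $\|\max_a|Q^*-PQ^*|\|\le c_0'\|Q^*-PQ^*\|$. With it your argument is complete and gives
\[
\|\hat Q^*-Q^*\|\le\Bigl(1+\tfrac{Lc_0'(1+c_1+c_2)}{\beta-C_{L,\Phi}}\Bigr)\|Q^*-PQ^*\|.
\]
This is within the stated constant whenever $c_0'\le 2c_0$, and equals $\frac{\beta}{\beta-C_{L,\Phi}}$ when $c_0'=c_0$.

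The integration-by-parts step you called milder needs the same treatment. With the $\mu$-weighted inner product it produces boundary terms and $\nabla\mu$ terms that Lemma~\ref{lemma:ineq} ignores. For instance, with $\Phi\equiv1$, $\mu$ uniform on $[-1,1]^2$ and $b\equiv1$, one gets $\langle\mathcal{L}g,1\rangle=\frac12(g(1)-g(-1))\neq0$. A value function with a steep transition near $s=1$ then breaks the bound through this term alone. So periodicity, or vanishing boundary flux, should also be stated as an explicit assumption.
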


Several remarks on the above theorem are in order. First, when the optimal value function $Q^*(s,a)$ can be represented by the bases $\Phi(s,a)$, then the Galerkin solution is accurate. Second, one can view $\min_{\th} \ll Q^* - \Phi^\top \th \rl$ as the model error, that is the best approximation one can obtain in the space spanned by the bases $\Phi$. The theorem shows that the Galerkin solution $\hat{Q}^*$ achieves, up to a multiplicative constant, the smallest possible error among all linear combinations of $\Phi$. Therefore, whenever the linear basis $\Phi(s,a)$ provides a good approximation to $Q^*$, the Galerkin solution inherits this accuracy and is guaranteed to be near-optimal within the chosen function class.

In the presence of model approximation error, the accuracy of the Galerkin
solution improves as $\beta$ becomes larger or $C_{L,\Phi}$ becomes smaller.
Based on the discussion following Assumption~\ref{ass-conv}, this implies
that the Galerkin approximation becomes more accurate under the following
conditions: 1) $\beta$ is larger; 2) the behavior policy explores the action space more uniformly; 3) the basis functions $\Phi$ are smoother; 4) the basis functions are more orthogonal under the off-policy data distribution.

\begin{proof}
We divide the error $e(s,a) = Q^*(s,a) - \hat{Q}^*(s,a)$ into two parts, 
\begin{equation}\label{error decomp}
    e(s,a) = e_G(s,a) + e_P(s,a), \quad e_P = Q^* - \Phi^\top u, \quad e_G = \Phi^\top v, \quad v = u - \th^*
\end{equation}
where $u, v\in \R^d, u = \arg\min_{\th}\ll Q^* -\Phi^\top\th \rl$. Here $e_G$ represents the Galerkin error, and $e_P$ represents the model error.

Taking inner product of the continuous-time Q-function \eqref{pde of Qstar} with $e_G$, one has
\[
\la r + \mL(\max_a Q^*), e_G\ra = \la \beta Q^*, e_G\ra
\]
Taking inner product of $v$ with the Galerkin equation \eqref{galerkin-solu}, one has
\[
\la r + \mL(\max_a \hat{Q}^*) , e_G\ra = \la  \beta \hat{Q}^*, e_G\ra
\]
subtracting the two equations gives,
\[
\la \mL(\max_a Q^* - \max_a \hat{Q}^*) , e_G\ra = \beta \ll e_G \rl^2 + \beta\la e_P, e_G\ra
\]
Now the LHS can be bounded by
\[
\begin{aligned}
LHS =& \la \max_a Q^* - \max_a \hat{Q}^*,  \mL^*e_G\ra \leq L(1+c_1+c_2)\ll \max_a Q^* - \max_a \hat{Q}^* \rl  \ll e_G \rl   \\
\leq &  L(1+c_1+c_2) \ll \max_a |e_G  + e_P| \rl \ll e_G \rl \\
\leq &  L(1+c_1+c_2)\l(\ll \max_a |e_G| \rl\ll e_G \rl       +\ll \max_a|e_P| \rl \ll e_G \rl \r)
\leq Lc_0(1+c_1+c_2)\l(\ll e_G \rl^2       + \ll e_P \rl \ll e_G \rl \r)
\end{aligned}
\]
where the first equality is due to integration by parts, and the first inequality is by Lemma \ref{lemma:ineq}. The last inequality is because of Assumption \ref{ass-conv}/(A1), for any $e(s,a) =e^\top \Phi(s,a) $ with $\ll e \rl_2\neq 0$, 
\begin{equation}\label{ineq-1}
    \ll \max_a|e| \rl = \frac{\ll \max_a|e| \rl}{\ll e \rl} \ll e \rl \leq \max_{v\in \mathbf{S}^{n-1}}\frac{\|\max_a |v^\top \Phi|\|}{\|\Phi^\top v\|} \ll e \rl  = c_0\ll  e \rl.
\end{equation}
Therefore, denote $C_{L,\Phi} = Lc_0(1+c_1+c_2)$, one has
\[
(\beta - C_{L,\Phi})\ll e_G \rl^2 \leq  C_{L,\Phi}\ll e_P \rl \ll e_G \rl -\beta  \la e_P, e_G\ra \leq(\beta +C_{L,\Phi})\ll e_P \rl \ll e_G \rl
\]
which implies 
\[
\ll e_G \rl \leq \frac{\beta + C_{L,\Phi}}{\beta - C_{L,\Phi}}\ll e_P \rl.
\]
One completes the proof by inserting the definition of $e_P$ in \eqref{error decomp}.

\end{proof}

The convergence guarantee is derived in the following theorem.

\begin{theorem}[Convergence of Q-learning under known dynamics]
\label{thm:galerkin_convergence}
Under Assumption \ref{ass-conv}, the Q-learning iteration defined in
\eqref{eq:newalgoite} with learning rate $\alpha_n = \alpha \in (0, \frac{2m}{K^2})$
converges to the Galerkin solution $\theta^*$ defined in \eqref{galerkin-solu}
at a linear rate. By setting the learning rate
$\alpha = \frac{m}{K^2}$, it achieves the optimal rate,
\[
\|\theta_n - \theta^*\|_2 \leq \left(\sqrt{1 - \frac{m^2}{K^2}}\right)^n \|\theta_0 - \theta^*\|_2,
\]
where $m = \lambda_1(\beta - C_{L,\Phi}), \quad K = \lambda_2(\beta + C_{L,\Phi})$.
\end{theorem}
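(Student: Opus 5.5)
The iteration \eqref{eq:newalgoite} is a fixed-step gradient-type method $\theta_{n+1}=\theta_n+\alpha F(\theta_n)$ for the nonlinear map
\[
F(\theta)=\la r+\mL\big(\max_a[\Phi^\top\theta]\big)-\beta\Phi^\top\theta,\ \Phi\ra ,
\]
whose zero is $\theta^*$ by \eqref{galerkin-solu}. I would treat $-F$ as strongly monotone with modulus $m$ and Lipschitz with constant $K$. Then the standard argument for strongly monotone Lipschitz operators gives
\[
\|\theta_{n+1}-\theta^*\|_2^2\le(1-2\alpha m+\alpha^2K^2)\|\theta_n-\theta^*\|_2^2 .
\]
The factor is below one exactly for $\alpha\in(0,2m/K^2)$, and it is minimized at $\alpha=m/K^2$, where it equals $1-m^2/K^2$. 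So the whole proof reduces to establishing these two inequalities for the difference $F(\theta)-F(\theta')$.

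\textbf{Strong monotonicity.} Set $w=\theta-\theta'$, $Q=\Phi^\top\theta$, $Q'=\Phi^\top\theta'$. Since $r$ cancels,
\[
\la F(\theta)-F(\theta'),w\ra=\la \mL(\max_aQ-\max_aQ'),\Phi^\top w\ra-\beta\|\Phi^\top w\|^2 .
\]
I would handle the first term exactly as in the proof of Theorem~\ref{thm:galerkin}.
\begin{itemize}
\item Integrate by parts to move the generator onto $\Phi^\top w$, giving $\la \max_aQ-\max_aQ',\mL^*\Phi^\top w\ra$.
\item Apply Lemma~\ref{lemma:ineq} to get $\|\mL^*\Phi^\top w\|\lesssim L(1+c_1+c_2)\|\Phi^\top w\|$.
\item Use $|\max_aQ-\max_aQ'|\le\max_a|\Phi^\top w|$ together with \eqref{ineq-1}, which bounds this by $c_0\|\Phi^\top w\|$.
\end{itemize}
This yields $\la F(\theta)-F(\theta'),w\ra\le-(\beta-C_{L,\Phi})\|\Phi^\top w\|^2$. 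Since $\|\Phi^\top w\|^2=w^\top Gw\ge\lambda_1\|w\|_2^2$, we get $-\la F(\theta)-F(\theta'),w\ra\ge m\|w\|_2^2$ with $m=\lambda_1(\beta-C_{L,\Phi})$. This is positive by (A3).

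\textbf{Lipschitz bound.} Write $F(\theta)-F(\theta')=\la g,\Phi\ra$ with $g=\mL(\max_aQ-\max_aQ')-\beta\Phi^\top w$. For a unit vector $z$, $z^\top\la g,\Phi\ra=\la g,\Phi^\top z\ra$. I would again integrate by parts and repeat the chain above with $\Phi^\top z$ as the test function:
\[
|\la g,\Phi^\top z\ra|\le(\beta+C_{L,\Phi})\|\Phi^\top w\|\,\|\Phi^\top z\|\le(\beta+C_{L,\Phi})\lambda_2\|w\|_2 ,
\]
using $\|\Phi^\top z\|\le\sqrt{\lambda_2}$ and $\|\Phi^\top w\|\le\sqrt{\lambda_2}\|w\|_2$. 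Taking the supremum over $z$ gives $K=\lambda_2(\beta+C_{L,\Phi})$. With both bounds in hand, I would expand $\|\theta_{n+1}-\theta^*\|^2=\|\theta_n-\theta^*\|^2+2\alpha\la F(\theta_n)-F(\theta^*),\theta_n-\theta^*\ra+\alpha^2\|F(\theta_n)-F(\theta^*)\|^2$, plug in, and iterate.

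\textbf{Main obstacle.} The delicate step is justifying the integration by parts, in particular the vanishing of boundary terms on the compact support $\Omega$ of $\mu$. The inner product carries the weight $\mu$, so the adjoint really acts on $\Phi^\top w\,\mu$ rather than on $\Phi^\top w$ alone. I would follow the paper's convention in Theorem~\ref{thm:galerkin}: treat $\mL^*$ as in Lemma~\ref{lemma:ineq} with the weight absorbed into the norms, and assume that either $\mu$ vanishes on $\partial\Omega$ or the boundary contributions are dropped. A second concern is that the $\max_a$ map is nonsmooth. This causes no difficulty, because only the pointwise Lipschitz property $|\max_aQ-\max_aQ'|\le\max_a|Q-Q'|$ is used, never differentiability.
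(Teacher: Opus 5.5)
Your proposal is correct and matches the paper's proof essentially step for step. It shows strong monotonicity of $-F$ (the paper's $\mathcal{H}$) with modulus $m=\lambda_1(\beta-C_{L,\Phi})$ via integration by parts, Lemma~\ref{lemma:ineq} and \eqref{ineq-1}, then a Lipschitz bound $K=\lambda_2(\beta+C_{L,\Phi})$, and expands the squared error to get the factor $1-2\alpha m+\alpha^2K^2$. Your Lipschitz step, testing against $\Phi^\top z$ for unit $z$, is a slightly cleaner version of the paper's bound $\|\max_a Q-\max_a P\|\,\|\mL^*\Phi\|$, and the weight/boundary issue you flag in the integration by parts is glossed over in the same way in the paper.
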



Several remarks on the theorem are in order. The optimal convergence rate
depends explicitly on the spectrum of the Gram matrix through $\lambda_1$
and $\lambda_2$, as well as the key parameters $\beta$ and $C_{L,\Phi}$.
In particular, better-conditioned bases lead to faster convergence: in the
special case where the basis functions are orthonormal under $L^2(\mu)$,
the Gram matrix becomes the identity, so $\lambda_1 = \lambda_2 = 1$,
yielding the fastest possible rate. The convergence rate therefore becomes
faster under the same conditions that make the Galerkin solution more
accurate, namely: 1) $\beta$ is larger; 2) the behavior policy explores the
action space more uniformly; 3) the basis functions $\Phi$ are smoother;
4) the basis functions are more orthogonal under the off-policy data
distribution.

\def\mH{\mathcal{H}}

\begin{proof}
Define
\begin{equation}
        \mathcal{H}(\theta) = \left\langle \beta \Phi^T \theta - \mathcal{L}_{b,\Sigma}\l(  \max_a \Phi^T \theta\r) - r, \Phi \right\rangle.
\end{equation}   
\paragraph{Monotonicity.}
We first show that $\mH$ is strongly monotone. 
Let $Q = \th^\top\Phi, P = \eta^\top \Phi$, expanding the left-hand side, we have
\begin{align*}
&\l(\mathcal{H}(\theta) -  \mathcal{H}(\eta) \r)^\top (\theta - \eta)\\
= &\beta\|Q-P\|^2 - \la (b \nabla + \frac{1}{2} \Sigma: \nabla^2)(\max_a Q - \max_a P), Q-P \ra\\
=&\beta\|Q-P\|^2 - \langle \max_a Q - \max_a P, \mL^*(Q-P) \ \rangle \\
\geq&\beta \|Q-P\|^2 - L(1+c_1+c_2)\ll\max_a Q - \max_a P\rl \ll Q - P \rl  \geq (\beta - C_{L,\Phi})\|Q-P\|^2
\end{align*}
where the first inequality is by Lemma \ref{lemma:ineq}, and the second inequality is obtained by applying \eqref{ineq-1}, 
\[
\ll\max_a Q - \max_a P\rl \leq \ll \max_a |Q - P| \rl \leq c_0\ll Q - P \rl
\]
Again by Assumption (A1), one has
\[
\ll Q - P \rl^2 = (\th - \eta)^\top\l[\int\Phi\Phi^\top \mu(s,a)dsda\r](\th - \eta) \geq \lam_1\ll \th - \eta \rl_2^2 
\]
Therefore, one has
\[
\l(\mathcal{H}(\theta) -  \mathcal{H}(\eta) \r)^\top (\theta - \eta) \geq \lam_1(\beta - C_{L,\Phi})\ll \th - \eta\rl_2^2.
\]

\paragraph{Lipschitz Continuity of $ \mathcal{H} $.}
We verify that $ \mathcal{H} $ is Lipschitz. Observe
\begin{align*}
&\|\mathcal{H}(\theta) - \mathcal{H}(\eta)\| 
\leq \beta\ll \langle \Phi^\top (\theta-\eta), \Phi \rangle\rl + \ll  \max_aQ - \max_a P \rl \ll  \mL^*\Phi \rl\\
\leq & \beta \ll G(\th - \eta)  \rl + C_{L,\Phi}\ll Q - P \rl\ll \Phi\rl 
\leq(\beta+ C_{L,\Phi}) \lam_2  \ll \th - \eta \rl_2 
\end{align*}
where Lemma \ref{lemma:ineq} and \eqref{ineq-1} is applied to obtained the second inequality, and Assumption \ref{ass-conv}/(A1) is used in the third inequality. 

\paragraph{Contraction Property.}
For fixed learning rate $\alpha_k = \alpha$, since the Galerkin iteration update is
\[
\theta_{k+1} = \theta_k - \alpha_k \mathcal{H}(\theta_k),
\]
and $\mathcal{H}(\th^*) = 0$, one has
\[
\begin{aligned}
&\ll \th_{n+1} - \th^* \rl_2^2 = \ll \th_n -\th^* - \alpha (\mathcal{H}(\th_n) -  \mathcal{H}(\th^*)) \rl_2^2 \\
\leq&     \|\theta_n - \th^*\|_2^2 - 2\alpha  (\mathcal{H}(\theta_n) - \mathcal{H}(\th^*))^\top (\theta_n - \th^*)+ \alpha^2 \|\mathcal{H}(\theta_n) - \mathcal{H}(\th^*)\|_2^2
\end{aligned}
\]
By Monotonicity and Lipschitz continuity of $\mathcal H$, we have
\begin{align*}
\ll \th_{n+1} - \th^* \rl_2^2\leq (1 - 2m\alpha + \alpha^2 K^2) \|\theta_n - \th^*\|_2^2,\quad m = \lam_1(\beta - C_{L,\Phi}), \quad K = \lam_2(\beta + C_{L,\Phi}).
\end{align*}
First note that since $K>m$, one has $1 - 2\alpha m+ \alpha^2 K^2 > 0$. Second, one chooses $0< \alpha < \frac{2m}{K^2}$ then the mapping becomes a contraction. Lastly, by setting $\alpha = m/K^2$, the convergence rate is optimized as 
\[
\ll \th_n - \th^* \rl_2 \leq \l(\sqrt{1-\frac{m^2}{K^2}}\r)^n \ll\th_0 - \th^* \rl_2 
\]. 
\end{proof}

\subsection{Q-learning under unknown dynamics}\label{sec:off-policy-phibe}
In the previous section, we assumed that the continuous dynamics $b, \sigma$ are known. In this section, we assume $b, \sigma$ are unknown. In Section \ref{sec:4.2.1}, we introduce PhiBE to naturally apply the algorithm and theoretical guarantees when only the discrete-time transition dynamics are known. In \ref{sec:4.2.2}, we describe how to implement the update rule \eqref{eq:newalgoite} in a data-driven manner when the transition dynamics is unknown and only the state-action-next-state-reward tuples  $\{(s_i, a_i,s'_i,r_i)\}_{i=1}^n$ as defined in \eqref{data} are available.

\subsubsection{PhiBE setting}\label{sec:4.2.1}
When only the discrete-time transition dynamics $\rho_{\Delta t}(s'|s, a)$ are available, one can replace the unknown drift and diffusion $b,\Sigma$ in the equation \eqref{pde of Qstar} for $Q^*$ with the approximated $\hat{b}$ and $\hat{\Sigma}$ according to \eqref{phibe-bsigma}, and one ends up with the Optimal-PhiBE-Q equation

\begin{definition}[Optimal-PhiBE-Q]\label{def:phibe-q}
\begin{equation}\label{pde of Qhatstar}
    \beta \hat{Q}^*(s,a) = r(s,a) + \mathcal{L}_{\hat{b},\hat{\Sigma}} \big[\max_{a} \hat{Q}^*(s,a)\big] , 
\end{equation}
where $\hat{b}, \hat{\Sigma}$ are defined in \eqref{phibe-bsigma}. 
\end{definition}

Substituting $(\hat{b}, \hat{\Sigma})$ into \eqref{operator-Q} yields a corresponding operator, whose fixed point corresponds to the optimal PhiBE solution to \eqref{optimal-phibe}.
Solving the Optimal-PhiBE-Q equation in Definition \ref{def:phibe-q}, the same algorithm \eqref{eq:newalgoite} applies with the approximated $\hat{b}, \hat{\Sigma}$. Therefore as long as $\hat{b}, \hat{\Sigma}$ satisfies the Assumption (A2), the corresponding algorithm 
\begin{equation}\label{eq:newalgoite-phibe}
\begin{aligned}
\theta_{n+1} =& \theta_n  + \alpha_n \bigg\langle  r(s,a) + \mathcal{L}_{\hat{b},\hat{\Sigma}}  \Big(\max_a [\Phi(s,a)^\top \theta_n]\Big)-\beta \Phi(s,a)^\top \theta_n, \ \Phi(s,a) \bigg\rangle 
\end{aligned}
\end{equation}
recovers the same limiting behavior as established in Theorem \ref{thm:galerkin} and Theorem \ref{thm:galerkin_convergence}.

\begin{corollary}
Under Assumption \ref{ass-conv}/(A1), (A3). For $\forall \epsilon>0$, if $\ll b \rl_{C^1}, \ll \Sigma \rl_{C^2} \leq L-\epsilon$, then there exists $\dt$ small enough, s.t., the Galerkin solution $\t{Q}^*(s,a) = \Phi(s,a)^\top\t{\th}^*$ for Optimal-PhiBE-Q,
\begin{equation}\label{galerkin-solu-phibe}
    \la r(s,a) + \mathcal{L}_{\hat{b},\hat{\Sigma}}  \l(\max_a \l[\Phi(s,a)^\top \t{\th}^*\r]\r)  - \beta\Phi(s,a)^\top\t{\th}^*,\Phi(s,a)\ra= 0
\end{equation}
has an accuracy of 
\[
\ll \t{Q}^*(s,a) - \hat{Q}^*(s,a) \rl \leq \frac{\beta + C_{L,\Phi}}{\beta - C_{L,\Phi}}\min_\th \ll \hat{Q}^*(s,a) - \Phi^\top \th \rl_\infty.
\]
In addition, with the same learning rate as in Theorem \ref{thm:galerkin_convergence}, the iterative algorithm \eqref{eq:newalgoite-phibe} will converges to $\t{\th}^*$ in the same rate as in Theorem \ref{thm:galerkin_convergence}.
\end{corollary}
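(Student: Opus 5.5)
The plan is to reduce the corollary to Theorem~\ref{thm:galerkin} and Theorem~\ref{thm:galerkin_convergence}, applied with the true coefficients $(b,\Sigma)$ replaced by the PhiBE coefficients $(\hat b,\hat\Sigma)$ of \eqref{phibe-bsigma}. Both theorems use the dynamics only through Assumption (A2), via Lemma~\ref{lemma:ineq}; (A1) and the constants $c_0,c_1,c_2$ depend only on $\Phi$ and $\mu$. So it suffices to show that $\|\hat b\|_{C^1}\le L$ and $\|\hat\Sigma\|_{C^2}\le L$ on $\Omega$ for $\Delta t$ small. Once this holds, the constant $C_{L,\Phi}=Lc_0(1+c_1+c_2)$ is unchanged, (A3) still gives $\beta>C_{L,\Phi}$, and every later step carries over verbatim.

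\textbf{Step 1: closeness of the PhiBE coefficients.} I will show $\|\hat b-b\|_{C^1}\to0$ and $\|\hat\Sigma-\Sigma\|_{C^2}\to0$ as $\Delta t\to0$. With the action frozen at $a$ on $[0,\Delta t]$, It\^o's formula gives
\[
\hat b(s,a)=\frac1{\Delta t}\int_0^{\Delta t}\E[b(s_\tau,a)]\,d\tau .
\]
Similarly,
\[
\hat\Sigma(s,a)=\frac1{\Delta t}\int_0^{\Delta t}\E\big[\Sigma(s_\tau,a)+2\,\mathrm{sym}\big((s_\tau-s)b(s_\tau,a)^\top\big)\big]\,d\tau .
\]
Hence $\hat b-b=O(\Delta t)$ and $\hat\Sigma-\Sigma=O(\Delta t)$ uniformly on the compact set $\Omega$. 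For the derivative bounds, I differentiate these representations in $s$ through the flow $s\mapsto s_\tau^{s}$, whose derivatives are controlled on $[0,\Delta t]$ and converge to the identity. This needs slightly more regularity of $b,\Sigma$ than (A2), e.g. one more derivative, which I would state as part of the ``suitable regularity'' already implicit in the PhiBE theory of \citep{zhu2025optimal}. With these bounds, for $\Delta t$ small,
\[
\|\hat b\|_{C^1}\le \|b\|_{C^1}+\epsilon\le L,\qquad \|\hat\Sigma\|_{C^2}\le\|\Sigma\|_{C^2}+\epsilon\le L,
\]
using the hypothesis $\|b\|_{C^1},\|\Sigma\|_{C^2}\le L-\epsilon$. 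This step is the main obstacle. The $C^2$ control of $\hat\Sigma$ involves second derivatives of the flow and of the cross term $(s_\tau-s)b^\top$, and I would handle it by a Gr\"onwall estimate on the variational equations over the short horizon.

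\textbf{Step 2: accuracy.} With (A1)--(A3) verified for $(\hat b,\hat\Sigma)$, I apply the proof of Theorem~\ref{thm:galerkin} with $Q^*$ replaced by $\hat Q^*$, the solution of \eqref{pde of Qhatstar}, and $\hat\theta^*$ replaced by $\tilde\theta^*$ from \eqref{galerkin-solu-phibe}. The integration by parts there uses $\mathcal L^*_{\hat b,\hat\Sigma}$, which Lemma~\ref{lemma:ineq} bounds by $L(1+c_1+c_2)$. This yields
\[
\|\tilde Q^*-\hat Q^*\|\le\frac{\beta+C_{L,\Phi}}{\beta-C_{L,\Phi}}\min_\theta\|\hat Q^*-\Phi^\top\theta\| ,
\]
and the stated bound follows since $\|\cdot\|\le\|\cdot\|_\infty$, because $\mu$ is a probability density.

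\textbf{Step 3: convergence.} The operator $\mathcal H$ built with $(\hat b,\hat\Sigma)$ satisfies the same strong monotonicity constant $m=\lambda_1(\beta-C_{L,\Phi})$ and the same Lipschitz constant $K=\lambda_2(\beta+C_{L,\Phi})$. The contraction argument of Theorem~\ref{thm:galerkin_convergence} then gives the same linear rate for \eqref{eq:newalgoite-phibe} with $\alpha=m/K^2$.
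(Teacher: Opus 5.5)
Your proposal matches the paper's proof. Both verify (A2) for $(\hat b,\hat\Sigma)$ by combining the $\epsilon$ slack with convergence of the PhiBE coefficients in $C^1$/$C^2$ as $\Delta t\to 0$, and then invoke Theorems~\ref{thm:galerkin} and~\ref{thm:galerkin_convergence} verbatim with the same constant $C_{L,\Phi}$. The only differences are that the paper cites the $O(\Delta t)$ derivative estimates from the earlier PhiBE work, whereas you sketch an It\^o/flow-derivative argument (which, like the cited result, needs regularity beyond (A2)), and that you spell out the step $\|\cdot\|\le\|\cdot\|_\infty$ which the paper leaves implicit.
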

\begin{proof}
As proved in \citep{zhu2025optimal}, $\ll \nb^k(\hat{b} - b) \rl_\infty \leq C\dt$ with $k = 0,1$, $\ll \nb^k(\Sigma - \hat{\Sigma})\rl_\infty \leq C\dt$ with $k = 0,1,2$. Therefore, for sufficiently small $\dt$, one has 
\[
\|b - \hat{b}\|_{C^1} +\| \Sigma - \hat{\Sigma}\|_{C^2} \leq \e
\]
then by the assumption, one has
$\|\hat{b}\|_{C^1} \leq \|b\|_{C^1} + \|b - \hat{b}\|_{C^1}  \leq L.$ Similarly, one can prove with the same $\dt$, $\|\hat{\Sigma}\|_{C^2} \leq \|\Sigma\|_{C^2} + \| \Sigma - \hat{\Sigma}\|_{C^2}  \leq L.$

Therefore, the approximated $\hat{b}, \hat{\Sigma}$ satisfies Assumption \ref{ass-conv}/(A2), then the guarantees in Theorems \ref{thm:galerkin} and \ref{thm:galerkin_convergence} follows. 

\end{proof}

\subsubsection{Data-driven algorithm}\label{sec:4.2.2}
When only discrete-time off-policy data \eqref{data} is available, our goal is to approximate the optimal action-value function $Q^*(s,a)$. 

Following the algorithm \eqref{eq:newalgoite-phibe} introduced in last section, we approximate the optimal-Q function $Q^*(s,a)$ using the bases $\Phi(s,a)=(\phi_1,\dots,\phi_n)^\top$.  In each iteration, given $Q_n(s,a) = \Phi(s,a)^\top\th_n$, one first computes 
\[
V_n(s) = \max_a Q_n(s,a).
\]
Then a mini-batch $\mathcal{B}_n\subset\mathcal{B}$ is sampled, and for each tuple $(s_i,a_i,r_i)$ in the mini-batch, the TD-error is computed as
\begin{equation}\label{td-data}
 \delta(s_i,s_i',a_i,r_i) = r_i + \mathcal{L}_{b_i, \Sigma_i}V_n(s_i)  - \beta Q_n(s_i,a_i),   
\end{equation}
which serves as an unbiased estimate of the PhiBE residual. Here $\mathcal{L}_{b,\Sigma}$ is defined in \eqref{eq:PE}, and 
\[
b_i = \frac{s'_i - s_i}{\Delta t}, \quad 
\hat \Sigma_i = \frac{(s'_i - s_i)(s'_i - s_i)^\top}{\Delta t}.
\]
Finally, the parameter is updated via
\begin{equation}\label{phibe-q-rule}
\theta_{n+1} = \theta_n +  \frac{\alpha_n}{|\mathcal{B}_n|} \sum_{(s_i,s_i',a_i,r_i)\in\mathcal{B}_n} \Phi(s_i,a_i)\, \delta(s_i,s_i',a_i,r_i),
\end{equation}

This procedure is repeated until a satisfactory approximation of the Galerkin solution for $Q^*(s,a)$ is obtained. 
A rigorous analysis of the sample complexity is left for future work.
The algorithm is presented in Algorithm~\ref{algo2}.

\begin{algorithm}
\caption{PhiBE Q-learning Method}
\begin{algorithmic}\label{algo2}
\STATE \textbf{Input:} Batch size $|\mathcal{B}_n|$, initial value $\theta_0$, discrete time step $\Delta t$, discount $\beta$, state-next-state-reward-action tuples $\mathcal{B} = \{(s_i, s'_i, r_i,a_i)\}_{i=1}^N$, basis $\Phi(s,a)=(\phi_1,\ldots,\phi_n)^\top$, steps $N_{\text{iter}}$, step-sizes $\{\alpha_n\}$
\STATE \textbf{Output:} Estimated parameter $\theta_{N_{\text{iter}}}$ and Q-function $\hat Q(s,a)=\theta^\top\Phi(s,a)$
\STATE $\th \gets \th_0$
\FOR{$n=0,\dots,N_{\text{iter}}-1$}
    \STATE Compute the optimal state-value function $V(s) \gets \max_a \Phi(s,a)^\top \th$
  \STATE Sample a mini-batch $\mathcal{B}_n\subset\mathcal{B}$, and compute PhiBE-Q error for each $(s_i,s_i',a_i,r_i)\in\mathcal{B}_n$
    \[
      \delta(s_i,s_i',a_i,r_i)= \frac{s_i'-s_i}{\dt}\cdot\nabla_sV(s_i)
        + \tfrac{1}{2\dt}(s_i' - s_i)(s_i' - s_i)^\top:\nabla_s^2V(s_i) - \beta \hat{Q}(s_i,a_i)
    \]
  \STATE Update parameter:
   $\theta\gets\theta +\frac{ \alpha_n}{|\mathcal{B}_n|}\sum_{(s_i,s_i',a_i,r_i)\in\mathcal{B}_n}
    \Phi(s_i,a_i)\,\delta(s_i,a_i,r_i)$
\ENDFOR
\STATE \textbf{return} $\hat Q(s,a)=\theta_{N_{\text{iter}}}^\top\Phi(s,a)$
\end{algorithmic}
\end{algorithm}

Several remarks on the Algorithm are in order. 

First, the algorithm can be viewed as a continuous-time analogue of classical Q-learning \citep{watkins1992q}. The classical approach constructs the TD error as the residual of the discrete-time Bellman equation \eqref{discrete-time be}, whereas our method is based on the PhiBE-Q equation \eqref{pde of Qhatstar}.

Second, the algorithm naturally extends to nonlinear function approximation, such as Deep Neural Networks (DNNs). One constructs the same TD error as in \eqref{td-data} using the parametric form  $Q(s,a;\th_n)$, and updates the parameters via
\[
\th_{n+1} = \th_n + \alpha_n \, \delta(s_i,s_i',a_i,r_i)\nabla_\th Q(s_i,a_i;\th_n).
\]
As in classical RL, convergence guarantees are generally unavailable under nonlinear function approximation. In practice, stability can be improved by decoupling the target from the online network. For example, following \citep{mnih2015human}, one may use a target network and define $V_n = \max_a Q(s,a;\th^-)$,
where $\th^-$ denotes parameters from a delayed copy of the Q-network.

Third, with the continuous-time formulations of $Q^*(s,a)$ in \eqref{pde of Qstar} and $Q^\pi(s,a)$ in \eqref{pde of q}, standard off-policy algorithms including Q-learning \citep{watkins1992q}, DQN \citep{mnih2015human}, TD3-BC \citep{fujimoto2021minimalist}, CQL \citep{kumar2020conservative}, etc can be directly adapted to solve continuous-time RL problems.

Finally, we would like to mention that this algorithm is typically applied in settings where $\max_a \Phi(s,a)^\top \theta$ can be computed efficiently. The numerical experiment in Section~\ref{sec:lqr} provides one such example. For discrete action spaces, $\max_a \Phi(s,a)^\top \theta$ is easy to evaluate; however, $V_n(s)$ is not necessarily differentiable. To address this, one may approximate the greedy action $a^*(s) = \arg\max_a Q(s,a)$ using a softmax policy $\pi(a \mid s) \propto \exp\!\big(Q(s,a)/\lambda\big)$, where $\lambda$ controls the degree of exploration. This leads to a softmax variant of PhiBE-Q-learning. Alternatively, one can approximate $\pt_{s_k}V(s_i) \approx \frac1h(V(s_i + he_k) - V(s_i))$, where $e_k$ denotes the $k$th standard basis vector in the state space and $h>0$ is a small finite-difference step size. Provided that $h<\Delta t$, the additional approximation error introduced by this finite-difference step does not dominate the overall discretization error.

\section{Experiments} \label{sec:4.3}
\subsection{LQR with stochastic dynamics}\label{sec:lqr}
We consider the linear dynamics with quadratic reward setting,
\[
b(s,a) = As + Ba, \quad\sigma(s,a) = \sigma, \quad r(s,a) = s^\top Q s+a^\top Ra,
\]
The system parameters are set as $A = B = 0.1$, $Q = R = -1$, and the discount factor $\beta = 1$ or $\beta = 2$. In the noise-free case, $\sigma = 0$, whereas in the noisy case, $\sigma = 0.2$. 

To generate the dataset, we simulate $I$ independent trajectories $\{(s_i, s'_i, r_i,a_i)\}_{i=1}^N$.
Here, $s \sim U(-1,1), a \sim U(-1,1)$ and $s'$ is the resulting next state after applying action $a$,
\[
\rho(s' \mid s,a) \sim \mathcal{N}\!\left(e^{A \Delta t}s + \frac{B}{A}\!\left(e^{A \Delta t}-1\right)a,\; \frac{\sigma^2}{2A}\!\left(e^{2A \Delta t}-1\right)\right).
\].

We represent $Q_\th(s,a) = \th^\top \Phi(s,a)$ using quadratic features $\Phi(s,a) = (s^2, a^2, 1)$, and initialize the parameter $\theta_0$ randomly. Note that in LQR, the corresponding greedy value function admits the explicit representation
\[
\max_a Q_\theta(s,a)
=
\theta_m^\top \phi(s),\quad \theta_m
=
\left(
\theta_1-\frac{\theta_2^2}{\theta_3},
\,0,\,
\theta_3
\right)^\top.
\]


The step size in all plots is set to $\alpha_n = 0.1$. Figure~\ref{fig:mainfig3} shows the evolution of the error $|\theta_n - \theta^*|$ for PhiBE under known discrete-time dynamics. We implement the algorithm in \eqref{eq:newalgoite-phibe}, where $\hat b$, $\hat \Sigma$, and the inner products are computed explicitly. The purpose of this experiment is to demonstrate that PhiBE provides an accurate approximation to the continuous-time optimal control problem and to validate our theoretical guarantees.

Figure~\ref{fig:mainfig4} shows the evolution of the error $|\theta_n - \theta^*|$ under discrete-time data, using Algorithm~\ref{algo2}. This experiment demonstrates that the proposed data-driven algorithm performs comparably to the algorithm with known discrete-time dynamics.

At each iteration, we sample a mini-batch from the dataset, compute the Optimal-PhiBE-Q residual defined in \eqref{td-data} at $\th_n$
\[
\delta(s,a,r) = r + (\widehat{\mathcal{L}}\,\max Q_{\th_n})(s)
+ (1-\beta)\,\max Q_{\th_n}(s) - Q_{\th_n},
\]
and update $\theta_n$ according to the PhiBE-Q-learning rule \eqref{phibe-q-rule}. 

\begin{figure}[H] 
    \centering 
    \begin{subfigure}[b]{0.45\textwidth} 
        \centering
        \includegraphics[width=\textwidth]{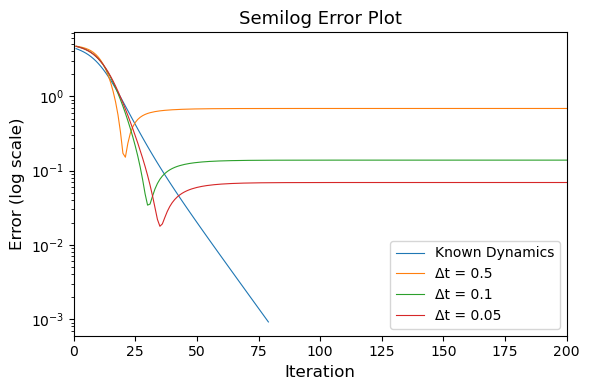} 
        \caption{Deterministic dynamics} 
        \label{fig:subfig5} 
    \end{subfigure}
    \hfill 
    \begin{subfigure}[b]{0.45\textwidth}
        \centering
        \includegraphics[width=\textwidth]{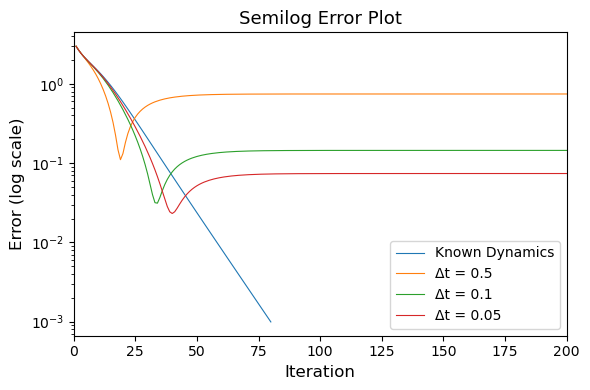}
        \caption{Stochastic dynamics}
        \label{fig:subfig6}
    \end{subfigure}
    \caption{
The evolution of PhiBE Q-learning under known discrete-time dynamics. 
Subfigure (a) shows the deterministic setting ($\sigma=0$), while subfigure (b) shows the stochastic setting ($\sigma=0.2$). $\beta = 1$ in the deterministic setting, $\beta = 2$ in the stochastic setting.
In each subfigure, different curves correspond to the exact known-dynamics update and its corresponding PhiBE approximation using different time steps $\Delta t$.
}

    \label{fig:mainfig3} 
\end{figure}

\begin{figure}[H] 
    \centering 
    \begin{subfigure}[b]{0.45\textwidth} 
        \centering
        \includegraphics[width=\textwidth]{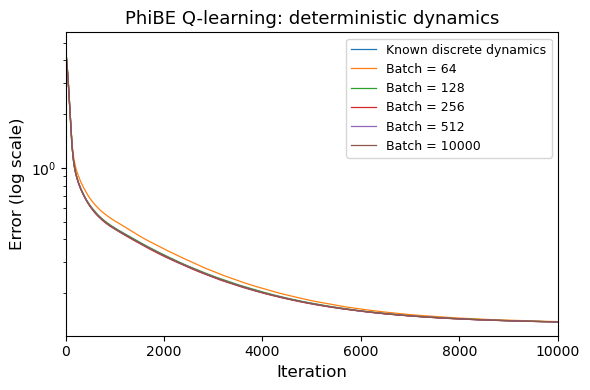} 
        \caption{Deterministic dynamics} 
        \label{fig:subfig7} 
    \end{subfigure}
    \hfill 
    \begin{subfigure}[b]{0.45\textwidth}
        \centering
        \includegraphics[width=\textwidth]{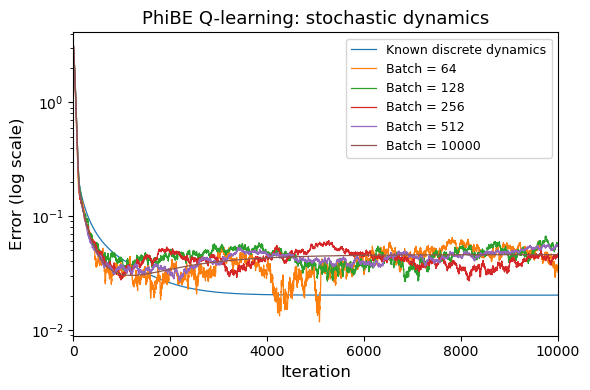}
        \caption{Stochastic dynamics}
        \label{fig:subfig8}
    \end{subfigure}
\caption{The evolution of PhiBE Q-learning when only discrete trajectory off-policy data are available with $\dt = 0.1$.
Subfigure (a) shows the deterministic setting ($\sigma=0$), while subfigure (b) shows the stochastic setting ($\sigma=0.2$).
In each subfigure, the batch-size curves correspond to data-driven PhiBE Q-learning with different batch sizes used at each iteration, while the additional baseline curve corresponds to PhiBE Q-learning with known discrete-time dynamics.
The discount parameter is $\beta = 1$ in the deterministic setting and $\beta = 2$ in the stochastic setting.}

    \label{fig:mainfig4} 
\end{figure}
For the known-dynamics setting, the results in Figure \ref{fig:mainfig3} show that PhiBE Q-learning closely tracks the exact discrete-time Q-iteration across both deterministic and stochastic dynamics. When the time step $\Delta t$ decreases, the PhiBE approximation becomes more accurate. In the stochastic case the long-term convergence trend remains nearly identical to the deterministic counterpart. The algorithm ultimately reaches the same accuracy level, confirming that the PhiBE Q-learning update is stable and robust to moderate diffusion noise in the transition dynamics.

For the off-policy data setting in Figure \ref{fig:mainfig4}, PhiBE Q-learning successfully recovers the Q-function from sample trajectories, and its convergence behavior is strongly influenced by the batch size. Larger batches lead to smoother error decay and significantly faster convergence, while very small batches introduce higher variance and slower stabilization. Comparing the deterministic and stochastic environments, we observe that stochasticity causes oscillations during the transient phase. However, by adopting a decreasing learning rate in later iterations, the method avoids persistent oscillation and converges to a similar final error plateau. These results demonstrate that PhiBE Q-learning remains effective even with off-policy data and stochastic dynamics, achieving reliable convergence across different sampling regimes.

\begin{figure}[H] 
    \centering 
    \begin{subfigure}[b]{0.45\textwidth} 
        \centering
        \includegraphics[width=\textwidth]{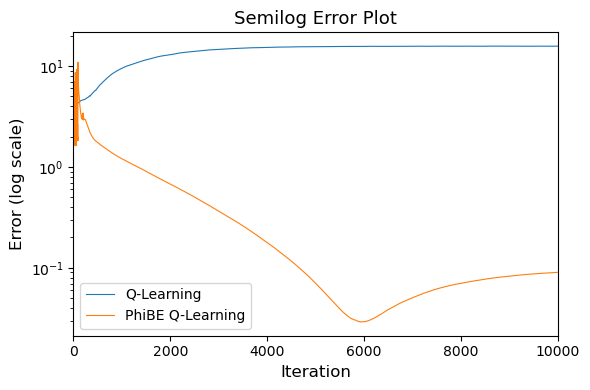} 
        \caption{Deterministic dynamics} 
        \label{fig:subfig9} 
    \end{subfigure}
    \hfill 
    \begin{subfigure}[b]{0.45\textwidth}
        \centering
        \includegraphics[width=\textwidth]{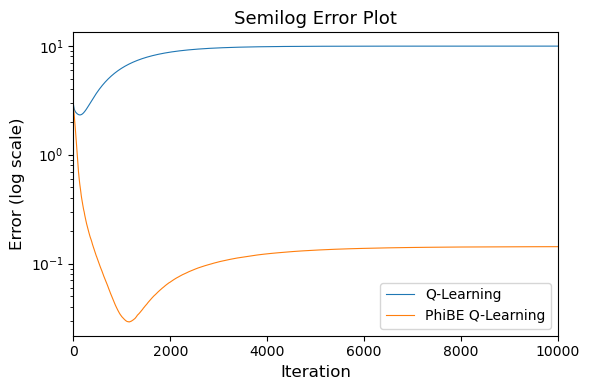}
        \caption{Stochastic dynamics}
        \label{fig:subfig10}
    \end{subfigure}
\caption{The evolution of PhiBE Q-learning when only discrete trajectory off-policy data are available compared with RL Q-Learning under same batch size $=64$.
Subfigure (a) shows the deterministic setting ($\sigma=0$), while subfigure (b) shows the stochastic setting ($\sigma=0.2$).
In each subfigure, different curves correspond to whether the method is based on PhiBE. We set $\Delta t = 0.1$, $\beta = 2$ in this setting.}
    \label{fig:Q} 
\end{figure}

To further demonstrate the effectiveness of our method, we also evaluate the standard Q-Learning algorithm using the same data and experimental configuration. As shown in Figure~\ref{fig:Q}, the Q-Learning method iterates converge with a larger error. Therefore, the experimental results clearly confirm that our approach is significantly more robust than the standard Q-learning in the continuous setting.
     
\section{Conclusion}
In this work, we propose a continuous-time Q function, enabling off-policy data usage in continuous-time RL settings. By introducing an time-evolutionary equation and leveraging the Galerkin projection, we develop a stable iterative scheme for solving continuous-time optimal control problems in a model-free way. 

We further establish convergence guarantees for the proposed algorithms under linear function approximation. The resulting stability conditions are independent of $\dt$, allowing our method to learn robustly from discrete trajectory data. Numerical experiments demonstrate the effectiveness of the proposed algorithms and highlight their superior convergence behavior compared to standard Q-learning, which may diverge in similar continuous-time settings.

Several promising directions remain open for future investigation. First, from a theoretical perspective, some of the assumptions used to establish the current guarantees may potentially be relaxed. While this paper focuses on convergence and convergence rate analysis, establishing theoretical sample complexity bounds is an important next step. Second, extending our methodology to large-scale problems via deep neural approximators presents both opportunities and challenges in expressiveness and stability. Third, incorporating exploration strategies and safety constraints would broaden applicability in real-world control systems. 

Overall, this work provides a principled and data-driven foundation for continuous-time reinforcement learning for off-policy data, and we believe the proposed framework will facilitate further developments toward scalable and reliable continuous-time decision-making.

\appendix
\section{Proof of Lemma \ref{lemma:uniform-optimal}}\label{sec:proof of uniform-optimal}

\subsection*{Setting}

Fix a state marginal $\mu(s)$.
The set of all Markov kernel behavior policies is
\begin{equation*}
\Pi := \Bigl\{\pi(\cdot|\cdot) : \pi(a|s) \geq 0,\;
\int_\mathcal{A}\pi(a|s)\,da = 1 \quad \mu(s)\text{-a.e.}\Bigr\},
\end{equation*}
and the uniform policy is $\pi^u(a|s) := 1/|\mathcal{A}|$.
The Gram matrix and coverage coefficient under $\pi$ are
\begin{equation*}
G^\pi := \mathbb{E}_s\!\left[\int_\mathcal{A}\pi(a|s)\,\Phi(s,a)\Phi(s,a)^\top\, da\right],
\end{equation*}
\begin{equation*}
c_0(\pi,\Phi)^2 := \max_{v \in \mathbf{S}^{n-1}}
\frac{\mathbb{E}_s\!\left[\max_{a\in\mathcal{A}}(v^\top\Phi(s,a))^2\right]}
     {v^\top G^\pi v}.
\end{equation*}
Since $\mathcal{A}$ is compact and $\Phi$ is continuous, the maximum over $a$
is attained; since $\mathbf{S}^{n-1}$ is compact and $v\mapsto c_0(\pi,\Phi)^2$
is continuous, the maximum over $v$ is also attained.
Define
\begin{equation*}
\mathcal{F}_B := \{\Phi \text{ continuous} : \|\Phi\|_\infty \leq B\},
\qquad
\bar{H}(\Phi) := \mathbb{E}_s\!\left[\int_\mathcal{A}\Phi(s,a)\Phi(s,a)^\top\, da\right].
\end{equation*}

\begin{proof}
We establish the result in two parts.

\paragraph{Part (I): Upper bound on $c_0(\pi^u, \Phi)^2$ for each fixed $\Phi$.}

For any fixed $\Phi \in \mathcal{F}_B$ and any $v \in \mathbf{S}^{n-1}$,
since $\|\Phi\|_\infty \leq B$:
\begin{equation*}
\mathbb{E}_s\!\left[\max_{a \in \mathcal{A}}(v^\top\Phi(s,a))^2\right] \leq B^2.
\end{equation*}
Under $\pi^u$:
\begin{equation*}
v^\top G^u v
= \frac{1}{|\mathcal{A}|}\,\mathbb{E}_s\!\left[\int_\mathcal{A}(v^\top\Phi(s,a))^2\,da\right]
= \frac{v^\top\bar{H}(\Phi)\,v}{|\mathcal{A}|}
\geq \frac{\lambda_{\min}(\bar{H}(\Phi))}{|\mathcal{A}|}.
\end{equation*}
Dividing and taking the maximum over $v$:
\begin{equation}
c_0(\pi^u,\Phi)^2
\leq \frac{B^2\,|\mathcal{A}|}{\lambda_{\min}(\bar{H}(\Phi))}. \label{eq:part1}
\end{equation}

\paragraph{Part (II): Every $\pi \neq \pi^u$ is strictly suboptimal.}

Fix any $\pi \in \Pi$ with $\pi \neq \pi^u$. Define
\begin{equation*}
A_- := \left\{a \in \mathcal{A} :
\mathbb{E}_s[\pi(a|s)] < \frac{1}{|\mathcal{A}|}\right\}.
\end{equation*}

\textit{Claim: $|A_-| > 0$.}
Since $\pi(\cdot|s)$ and $\pi^u$ are both probability densities on $\mathcal{A}$:
\begin{equation*}
\int_\mathcal{A}\mathbb{E}_s[\pi(a|s)]\,da
= \mathbb{E}_s\!\left[\int_\mathcal{A}\pi(a|s)\,da\right] = 1
= \int_\mathcal{A}\frac{1}{|\mathcal{A}|}\,da.
\end{equation*}
So $\mathbb{E}_s[\pi(\cdot|s)] - 1/|\mathcal{A}|$ integrates to zero over
$\mathcal{A}$ but is not identically zero (since $\pi \neq \pi^u$).
Therefore it must be strictly negative on a set of positive Lebesgue measure,
i.e., $|A_-| > 0$.

\textit{Adversarial basis functions.}
Fix any $v_0 \in \mathbf{S}^{n-1}$ and define
\begin{equation*}
\Phi^*(s,a) := B\,v_0\,\mathbf{1}_{A_-}(a).
\end{equation*}
Note that $\Phi^*$ does not depend on $s$ and $\|\Phi^*\|_\infty \leq B$,
so $\Phi^* \in \mathcal{F}_B$.

\textit{Numerator} (same for all $\pi$):
\begin{equation}
\mathbb{E}_s\!\left[\max_{a \in \mathcal{A}}(v_0^\top\Phi^*(s,a))^2\right]
= B^2. \label{eq:num}
\end{equation}

\textit{Denominator under $\pi$:}
\begin{equation}
v_0^\top G^\pi v_0
= \mathbb{E}_s\!\left[\int_\mathcal{A}(v_0^\top\Phi^*(s,a))^2\,\pi(a|s)\,da\right]
= B^2\int_{A_-}\mathbb{E}_s[\pi(a|s)]\,da. \label{eq:den_pi}
\end{equation}

\textit{Denominator under $\pi^u$:}
\begin{equation}
v_0^\top G^u v_0
= B^2\int_{A_-}\frac{1}{|\mathcal{A}|}\,da
= \frac{B^2\,|A_-|}{|\mathcal{A}|}. \label{eq:den_u}
\end{equation}

\textit{Strict inequality.}
By definition of $A_-$, we have
$\mathbb{E}_s[\pi(a|s)] < 1/|\mathcal{A}|$ for every $a \in A_-$, so:
\begin{equation}
\int_{A_-}\mathbb{E}_s[\pi(a|s)]\,da
< \frac{|A_-|}{|\mathcal{A}|}. \label{eq:strict}
\end{equation}

\textit{Conclusion.}
Since the numerator $B^2$ in \eqref{eq:num} is identical for both policies,
combining \eqref{eq:den_pi}--\eqref{eq:strict}:
\begin{equation*}
c_0(\pi,\Phi^*)^2
= \frac{B^2}{v_0^\top G^\pi v_0}
= \frac{1}{\int_{A_-}\mathbb{E}_s[\pi(a|s)]\,da}
\overset{\eqref{eq:strict}}{>}
\frac{|\mathcal{A}|}{|A_-|}
= \frac{B^2}{v_0^\top G^u v_0}
= c_0(\pi^u,\Phi^*)^2.
\end{equation*}
Therefore
\begin{equation*}
\sup_{\Phi\in\mathcal{F}_B}\, c_0(\pi,\Phi)^2
\geq c_0(\pi,\Phi^*)^2
> c_0(\pi^u,\Phi^*)^2,
\end{equation*}
and since $\pi \neq \pi^u$ was arbitrary, $\pi^u$ is the unique minimizer.
\end{proof}

\subsection*{Corollary}

For any $\Phi \in \mathcal{F}_B$ with $\lambda_{\min}(\bar{H}(\Phi)) \geq \sigma > 0$:
\begin{equation*}
\min_{\pi\in\Pi}\;\sup_{\Phi\in\mathcal{F}_B}\, c_0(\pi,\Phi)
= \sup_{\Phi\in\mathcal{F}_B}\, c_0(\pi^u,\Phi)
\leq \frac{B\sqrt{|\mathcal{A}|}}{\sqrt{\sigma}}.
\end{equation*}
Consequently, the convergence condition $\beta \geq Lc_0$ holds under $\pi^u$
whenever
\begin{equation*}
\beta \;\geq\; \frac{LB\sqrt{|\mathcal{A}|}}{\sqrt{\sigma}},
\qquad \sigma := \lambda_{\min}(\bar{H}(\Phi)),
\end{equation*}
and no other $\pi \in \Pi$ guarantees this bound uniformly over $\mathcal{F}_B$.

\bibliography{PDL.bib}

\end{document}